\def\isFV{true}
\def\isBJ{true}
\newcommand{\fvonly}[1]{\ifthenelse{\equal{\isFV}{true}}{#1}{}}
\newcommand{\fvsh}[2]{\ifthenelse{\equal{\isFV}{true}}{#1}{#2}}
\newcommand{\mydisp}[2]{\ifthenelse{\equal{\isFV}{true}}{\[ #1\]}{$ #1 $#2}}
\newcommand{\vsp}{V\xspace}
\def\isBeamer{false}
\def\isBeamer{false}
\newcommand{\mychapterskip}{\ifthenelse{\equal{\isMPK}{true}}
{\vspace{-.8in}}{}}
\newcommand{\fullversioncolor}{\color{black!50!blue}}
\newcommand{\versions}[2]{\ifthenelse{\equal{\isfullversion}{true}}{{\fullversioncolor#1}}{#2}}
\renewcommand{\vec}[1]{\mathbf{#1}}
\newcommand{\quot}[1]{``#1''}
\newcommand{\R}[1]{\ensuremath{\mathbb{R}^{#1}\xspace}}
\newcommand{\Euc}[1]{\ensuremath{\mathbf{E}^{#1}\xspace}}
\newcommand{\RD}[1]{\ensuremath{(\mathbb{R}^{#1})^{*}\xspace}}
\newcommand\proj[1]{\ensuremath{\mathbf{P}(#1)}\xspace}
\newcommand\RP[1]{\ensuremath{\mathbb{R}{P^{#1}}\xspace}}
\newcommand{\e}[1]{\vec{e}_{#1}}
\newcommand{\EE}[1]{\vec{E}_{#1}}
\newcommand{\one}{\vec{1}}
\newcommand{\eye}{\vec{I}}
\newcommand{\MN}{metric-neutral\xspace}
\newcolumntype{Y}{X}
\newcommand{\grade}[2]{\langle #1 \rangle_{#2}}
\newcommand{\pdgrass}[1]{\proj{\bigwedge({\mathbb{R}^{#1})^*}}}
\newcommand{\clal}[3]{\mathbb{R}_{#1,#2,#3}\xspace}
\newcommand{\dclal}[3]{\mathbb{R}^*_{#1,#2,#3}\xspace}
\newcommand{\pclal}[3]{\proj{\mathbb{R}_{#1,#2,#3}}\xspace}
\newcommand{\pdclal}[3]{\proj{\mathbb{R}^*_{#1,#2,#3}}\xspace}
\newcommand{\myexercise}{\hspace{-.31in}\textbf{Exercise:~~\xspace}}
\newcommand{\myexercises}{\hspace{-.31in}\textbf{Exercises:~~\xspace}}
\newcommand{\myboldhead}[1]{\vspace{.1in}\hspace{-.35in}\textbf{#1.}}
\newcommand{\app}[1]{\Sec{sec:J}}
\definecolor{mypurple}{rgb}{1,0,1}
\definecolor{mygreen}{rgb}{0, .5, 0}
\newcommand{\Fig}[1]{Fig.~\ref{#1}}
\newcommand{\Tab}[1]{Table~\ref{#1}}
\newcommand{\Sec}[1]{Sect.~\ref{#1}}
\newcommand{\mycorrection}[1]{}
\newtheorem{mythm}{Theorem}
\newtheorem{mylemma}{Lemma}
\renewcommand{\myboldhead}[1]{\vspace{.1in}\hspace{\xyx}\textbf{#1.}}
\newcommand{\gTh}{\cite{gunnThesis}\xspace}
\begin{document}

\ifthenelse{\equal{\isBJ}{false}}{

\title{Doing euclidean plane geometry\\using projective geometric algebra}
\author{Charles G. Gunn } 
}
{
\title{Doing euclidean plane geometry\\using projective geometric algebra\footnote{This article has been published as \cite{gunn2016b}, DOI 10.1007/s00006-016-0731-5.  The final publication is available at link.springer.com.}}
\author{Charles G. Gunn}
\address{ Instit\"{u}t f\"{u}r Mathematik  MA 8-3\\
Technische Universit\"{a}t Berlin\\
Str. des 17 Juni 136\\
10623 Berlin Germany\\
\\
Charles G. Gunn\\Raum+Gegenraum \\  Brieselanger Weg 1 \\ 14612 Falkensee Germany  }
\email{gunn@math.tu-berlin.de}
\keywords{euclidean geometry, plane geometry, geometric algebra, projective geometric algebra, degenerate signature,  sandwich operator, orthogonal projection, isometry}
}
\maketitle

\begin{abstract}
\fvsh{
The article presents a new approach to euclidean plane geometry based on  projective geometric algebra (PGA).  It is designed for anyone with an interest in plane geometry, or who wishes to familiarize themselves with PGA.  After a brief review of PGA, the article focuses on $\pdclal{2}{0}{1}$, the PGA for euclidean plane geometry.  It first explores the geometric product involving pairs and triples of basic elements (points and lines), establishing a wealth of fundamental metric and non-metric properties.  It then applies the algebra to a variety of familiar topics in plane euclidean geometry and shows that it compares favorably with other approaches in regard to completeness, compactness, practicality, and elegance. The seamless integration of euclidean and ideal (or \quot{infinite}) elements forms an essential and novel feature of the treatment.  Numerous figures accompany the text.   For readers with the requisite mathematical background, a self-contained coordinate-free introduction to the algebra is provided in an appendix.  
}
{
The article provides an introduction to the use of projective geometric algebra (PGA) for doing euclidean plane geometry. After a quick review of PGA, the article undertakes a detailed study of the geometric product of blades: pairs of lines, pairs of points, point-line pair, 3 lines, and 3 points.   It then applies these results to a selection of  application areas:  distance formulae, isometries via sandwiches, 
sums and differences of points and of lines,  orthogonal projections, 
and a step-by-step solution of a sample geometric construction.   It concludes by briefly comparing  PGA to alternative approaches.
}
\end{abstract}

\section{Introduction}

The $19^{th}$ century witnessed an unprecedented development of geometry and algebra.  We need only mention the development of projective and non-euclidean geometries, complex and quaternion number systems, and Grassmann algebra to indicate the depth and breadth of these developments, many of which came together in William Clifford's invention of \emph{geometric algebra} (\cite{clifford78}).  This is a  comprehensive algebraic structure that models both incidence relations and metric relations -- for a variety of metric geometries -- in a concise and powerful form, and which is  ideally suited to computational implementation. The teaching and practice of euclidean  geometry in the $20^{th}$ century, however, remained largely untouched by these developments, except for the introduction of vector and linear algebra to supplement the standard tools of analytic geometry.    

In recent years, however,  geometric algebra  has found growing acceptance as a tool for euclidean geometry. Those seeking geometric algebra toolkits for doing $n$-dimensional euclidean geometry  find two popular solutions in the contemporary literature:    the  so-called \emph{vector geometric algebra}  (VGA), using $n$-dimensional coordinates (\cite{dfm07}, Ch. 10); and  \emph{conformal geometric algebra} (CGA), which uses $(n+2)$-dimensional coordinates (\cite{dfm07}, Ch. 13). \cite{gunn2011}, \cite{gunnFull2010}, and \cite{gunn2016} feature a third model, less well known than these two, which fits naturally between them:  \emph{projective geometric algebra} (or PGA for short), which uses $(n+1)$-dimensional coordinates to model $n$-dimensional metric spaces of constant curvature: euclidean, hyperbolic, and elliptic.   This article provides an introduction to euclidean PGA, by applying it to the euclidean plane $\Euc{2}$. 

\subsection{Structure of the article} 
\ifthenelse{\equal{\isFV}{false}}{
This article assumes familiarity with geometric algebra in general and with \cite{gunn2011} or \cite{gunnFull2010} in particular.  
It begins with a quick review of the dual projective geometric algebra $\pdclal{2}{0}{1}$ which forms the basis of PGA for $\Euc{2}$.  It then goes on to a detailed discussion of products of 2 or 3 $k$-vectors, with focus on the fruitful interplay of euclidean and ideal elements.  There follows a sequence of applications of this product to plane geometry:  distance formulae; isometries as sandwiches;  sums and differences of $k$-vectors; orthogonal projections; 
and a step-by-step solution to a classical geometric construction problem.  
Finally, the article compares  $\pdclal{2}{0}{1}$ to alternative approaches to doing plane  geometry.
}
{
\Sec{sec:algintro} gives a brief overview of geometric algebra. \Sec{sec:gaep} then introduces the dual projective geometric algebra $\pdclal{2}{0}{1}$ as a geometric algebra for doing euclidean plane geometry. There follows a discussion of the basis elements in different grades and how they can be normalized,  along with the distinction between euclidean and ideal elements. \Sec{sec:gpdetail}  examines in detail the geometric product of 2  elements of various grades and types, while \Sec{sec:threes} does the same for 3-way products.   In the following sections, the resulting  compact and powerful geometric toolkit  is  applied to a sequence of topics in plane geometry:  distance formulae (\Sec{sec:daf}), sums and differences of $k$-vectors (\Sec{sec:sumdiff}), isometries as sandwiches (\Sec{sec:isom}), 
orthogonal projections (\Sec{sec:orthpro}),  
and a step-by-step solution to a classical geometric construction problem (\Sec{sec:example}). 
\Sec{sec:neg}  gives the interested reader an overview of directions for further study.
The article concludes (\Sec{sec:eac}) by evaluating the results obtained and comparing them to alternative approaches to doing euclidean plane  geometry.  Appendix A features a coordinate-free derivation of the results of \Sec{sec:gaep} for readers with the necessary mathematical sophistication.
}
 \vspace{-.05in}
 \section{Geometric algebra fundamentals}
\label{sec:algintro}
\fvsh{ 
A self-contained introduction to geometric algebra lies outside the scope of this article.  We sketch here the essential ingredients;  interested readers are referred to  the textbook \cite{dfm07} for a modern computer science approach or \cite{artin57} for an older, more mathematical approach.  The Wikipedia article entitled \quot{geometric algebra} is also quite useful.  Readers should keep in mind that none of these references deal with degenerate metrics, which form a key feature of the approach described here.

\myboldhead{Grassmann algebra} Geometric algebra can be built upon the combination of an outer and an inner product on a vector space.  We assume the reader is familiar with real vector spaces, and also with the exterior (or Grassmann) algebra $\bigwedge{\vec{V}}$ constructed atop a real $n$-dimensional vector space $\vec{V}$.  This is a graded algebra in which the  elements of grade-$k$ ($\bigwedge^{k}{\vec{V}}$) correspond to the weighted vector subspaces of $\vec{V}$ of dimension $k$\footnote{Two elements $\vec{a}$ and $\vec{b}$ that satisfy $\vec{a} = \lambda\vec{b}$ for some non-zero $\lambda \in \mathbb{R}$ represent the same subspace, but with different weights.  The weight is discussed in more detail below in \Sec{secwandf}} .  Each grade is a vector space in its own right of dimension $n \choose k$.  The exterior product \[\wedge: \bigwedge^{k}{\vec{V}} \times \bigwedge^{m}{\vec{V}} \rightarrow \bigwedge^{k+m}{\vec{V}}\] is a binary operator that is bilinear and anti-symmetric in its arguments.  Geometrically, $\wedge$ is the join operator on the subspaces of $\vec{V}$: it gives the $(k+m)$-dimensional subspace spanned by its arguments, or 0 if they are linearly dependent.  It is also called the \emph{outer} product. The Grassmann algebra has $(n+1)$ non-zero grades, from $0$ (the scalars) to $n$ (the so-called pseudoscalars).  $\bigwedge{\vec{V}}$ has  total dimension $2^{n}$, as a glance at Pascal's triangle shows.

\myboldhead{Symmetric bilinear forms} We also assume the reader is familiar with symmetric bilinear forms on a vector space, which allow us to define inner products on $\vec{V}$.  Such a form B is characterized by its \emph{signature}, an integer triple $(p,n,z)$ where $p+n+z=n$.  Sylvester's Inertia Theorem asserts that there is a basis for V for which B is a diagonal matrix with $p$ $1$'s, $n$ $-1$'s, and $z$ 0's on the diagonal. If $z \neq 0$, we say the inner product is degenerate.  We will see below that the signature for euclidean geometry is degenerate.

\myboldhead{Measurement} In the standard euclidean vector space $\R{3}$,  measurement of angles between vectors $\vec{u} := (x_{u},y_{u},z_{u})$ and $\vec{v} := (x_{v},y_{v},z_{v})$ is determined by the standard euclidean inner product $\vec{u} \cdot \vec{v} := x_{u}x_{v}+y_{u}y_{v}+z_{u}z_{v}$, with signature $(3,0,0)$.    Using this inner product, one can compute the angle between vectors or between planes (elements of the dual vector space).  If $\vec{u}$ and $\vec{v}$ are two unit-length 1-vectors, then the inner product $\vec{u} \cdot \vec{v}$ is well-known to be the cosine of their angle. 

\myboldhead{Geometric product} The geometric algebra arises by supplementing the outer product with the inner product.  One defines the \emph{geometric product} on 1-vectors of $\bigwedge{\vec{V}}$ by \[ \vec{a}\vec{b} := \vec{a} \cdot \vec{b} + \vec{a} \wedge \vec{b}\] The right-hand side is the sum of a 0-vector (scalar) and 2-vector (plane through the origin). This definition can be extended to the whole Grassmann algebra, yielding an associative algebra called the geometric (or Clifford) algebra with signature $(p,n,z)$.   In the example above, we obtain  $\clal{3}{0}{0}$. For details see \cite{dfm07}.  

\myboldhead{Some terminology} The general element in a geometric algebra is called a \emph{multivector}.  For a multivector $\vec{M}$, the grade-$k$ part is written $\langle \vec{M} \rangle_{k}$, hence $\vec{M} = \sum_{k} \langle \vec{M} \rangle_{k}$.   An element of $\bigwedge^{k}{\vec{V}}$ is called a $k$-vector. A $k$-vector that is the product of $k$ 1-vectors is called a \emph{simple} $k$-vector, or a \emph{blade}. For a $k$-vector $\vec{A}$ and an $m$-vector $\vec{B}$, the dot product $\vec{A}\cdot \vec{B} := \langle \vec{A}\vec{B} \rangle_{| k-m |}$ is defined as  the lowest grade component of $\vec{A}\vec{B}$. The wedge $\vec{A} \wedge \vec{B} = \langle \vec{A} \vec{B} \rangle_{k+m}$ is, on the other hand,  the highest grade component. This is consistent with the definition of the product of two 1-vectors above.  $\widetilde{\vec{X}}$, the \emph{reversal} of a multi-vector $\vec{X}$, is obtained by reversing the order of all products involving 1-vectors. $\widetilde{\vec{X}}$ is an algebra involution, needed below in \ref{sec:prtworef}.

 \section{Geometric algebra for the euclidean plane}
\label{sec:gaep}
The above behavior for $\R{3}$ is typical of any geometric algebra with non-degenerate metric: the inner product provides the necessary information to calculate the angle or distance between the two elements.  What is the situation in the euclidean plane $\Euc{2}$? What kind of inner  product do we need to measure the angle between two euclidean lines?

\begin{figure}[t]
  \centering
    \setlength\fboxsep{0pt}\fbox{\includegraphics[width=0.6\textwidth]{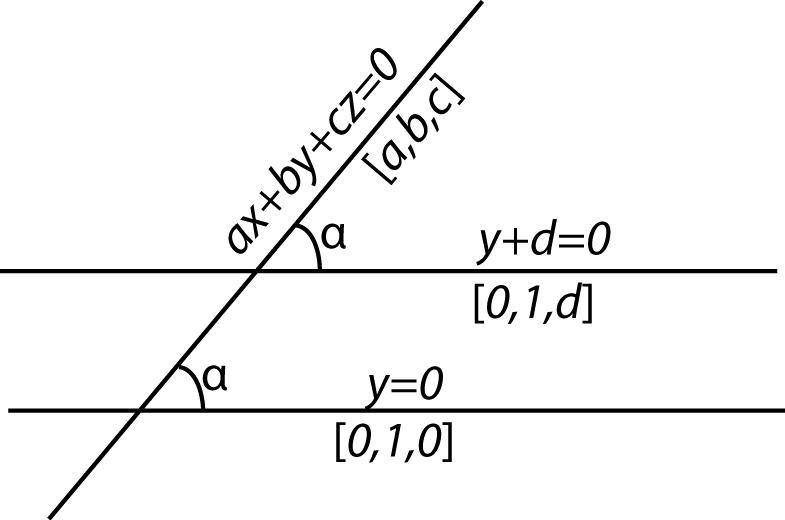}}
  \caption{Angles of euclidean lines}
\label{fig:anglelines}
\end{figure} 
Let \mydisp{
a_{0}x + b_{0}y + c_{0} = 0,~~~~~~~
a_{1}x + b_{1}y + c_{1} = 0
}{}   be two oriented lines which intersect at an angle $\alpha$.  We can assume without loss of generality that the coefficients satisfy ${a_{i}^{2} + b_{i}^{2} = 1}${.}  Then it is not difficult to show that \mydisp{a_{0} a_{1} + b_{0} b_{1} = \cos{\alpha}}{.} 
Unlike the inner product for the case of vectors in $\clal{3}{0}{0}$, here the third coordinate of the lines makes no difference in the angle calculation: translating a line changes only its third coordinate, while leaving  the angle between the lines unchanged.  Refer to Fig. \ref{fig:anglelines} which shows an example involving a general line and a pair of horizontal lines. Hence the proper signature for measuring angles in $\Euc{2}$ is $(2,0,1)$.  This is a so-called \emph{degenerate} inner product since the last entry in the signature is non-zero.

Notice that to model lines and points in a symmetric way we adopt homogeneous coordinates so line equations appear as $ax + by + cz  = 0$.  That is, we work in projective space $\RP{n}$.  Hence, to produce a geometric algebra for the euclidean plane we must attach the signature $(2,0,1)$  to a \emph{projectivized}  Grassmann algebra.  As the above discussion yields a way to measure the angle between \emph{lines} rather than the distance between \emph{points}, we choose the \emph{dual} projectivized Grassmann algebra $\pdgrass{3}$ for this purpose, where 1-vectors represents lines, 2-vectors represent points, and $\wedge$ is the \emph{meet} operator.  This leads to the geometric algebra $\pdclal{2}{0}{1}$ as the correct one for plane euclidean geometry.  We call it \emph{projective} geometric algebra  (PGA) due to its close connections to projective geometry.  (The standard Grassmann algebra leads to $\pclal{2}{0}{1}$, which models dual euclidean space, a different metric space.)

PGA for euclidean geometry first appeared in the modern literature in \cite{selig00} and \cite{selig05} and was extended and developed in \gTh, \cite{gunn2011}, \cite{gunnFull2010}, and \cite{gunn2016}. Readers unfamiliar with duality or projectivization, or just interested in a fuller, more rigorous treatment, should consult the latter references.  The  4-dimensional subalgebra consisting of scalars and bivectors, also known as the \emph{planar quaternions}, has a long history as a tool for kinematics in the plane (\cite{blaschke38}, \cite{mcc}). 

\subsection{Meet and join}

As mentioned above, the wedge operator $\wedge$ in $\pdclal{2}{0}{1}$ is the \emph{meet} operator. It is important to have access to the join operator also.   Since the typical solution to this challenge assumes a non-degenerate metric, we sketch a non-metric approach, for details see \cite{gunnThesis}.
The Poincar\'{e} isomorphism $\vec{J}: G \leftrightarrow G^{*}$ between the Grassmann algebra  $G$ and the dual Grassmann algebra $G^{*}$ can be used to define the \emph{join} operator $\vee$ in $\pdclal{2}{0}{1}$ :
\[ \vec{A} \vee \vec{B} := \vec{J}(\vec{J}(\vec{A}) \wedge \vec{J}(\vec{B})) \]
$\vec{J}$ is also sometimes called the \emph{dual coordinate} map. It is essentially an \emph{identity} map, since it maps a geometric entity in the Grassmann algebra to the \emph{same} geometric entity in the dual Grassmann algebra. For example, in projective 3-space $\RP{3}$, a line $\vec{L}$ can be represented as a bivector in G since it is the join of two points (1-vectors in $G$).  It also appears as a bivector in $G^*$ since it can also be represented as the intersection of two planes (1-vectors in $G^*$). In general, a geometric entity represented by a $k$-vector in $G$ will be represented by an $(n-k)$-vector in $G^*$, where $n$ is the dimension of the underlying vector space.  $\vec{J}$ allows one to move back and forth between these two dual representations depending on the circumstances. One can also implement the join operator using the  \emph{shuffle operator} within $\pdclal{2}{0}{1}$ (\cite{selig05}, Ch. 10).  

 }
 {
The dual projectivized geometric algebra $\pdclal{2}{0}{1}$  (\cite{selig05}, \cite{gunn2011}) is built on the \emph{dual} projectivized Grassmann algebra (where 1-vectors represent lines, not points).   Due to space limitations, we present the main features here without discussion or proof.  The inner product  signature $(2,0,1)$ has one zero term since one of the 3 coordinates of a  euclidean line has no effect on the angle it makes with other lines. See the left-hand diagram of \Fig{fig:emvl}.   The wedge operator in $\pdclal{2}{0}{1}$ is the meet operator.  To obtain the join operator $\vee$ one can use the the Poincar\'{e} isomorphism $\vec{J}: G \leftrightarrow G^{*}$ between the Grassmann algebra  $G$ and the dual Grassmann algebra $G^{*}$ (\cite{gunnThesis}):
\[ \vec{A} \vee \vec{B} := \vec{J}(\vec{J}(\vec{A}) \wedge \vec{J}(\vec{B}) \]  We present an alternative method to calculate the join specialized for $\pdclal{2}{0}{1}$ in \Sec{sec:shuffle} below.
 }
 
%

\subsection{Basis vectors of the algebra}
\label{sec:bva}
We provide here a treatment of the algebra based on a choice of basis elements; a coordinate-free treatment for more mathematically sophisticated readers can be found in Appendix A.

$\pdclal{2}{0}{1}$ has an orthogonal basis of 1-vectors  $\{\e{0}, \e{1}, \e{2}\}$ satisfying 
\mydisp{\e{0}^{2} = 0, ~~\e{1}^{2} = \e{2}^{2} = 1,~~~\e{i} \cdot \e{j} = 0~~\text{for}~~i \neq j}{.}  $\e0$ is the \emph{ideal} line of the plane (sometimes called the \quot{line at infinity}) which we  write  as $\vec{\omega}$, $\e{1}$ is the $x=0$ line (vertical!) and $\e{2}$ is the $y=0$ line (horizontal).  All lines except $\vec{\omega}$ belong to the euclidean plane and are called \emph{euclidean} lines. 
\begin{figure}[h] 
  \centering
    \setlength\fboxsep{0pt}\fbox{\includegraphics[width=0.65\textwidth]{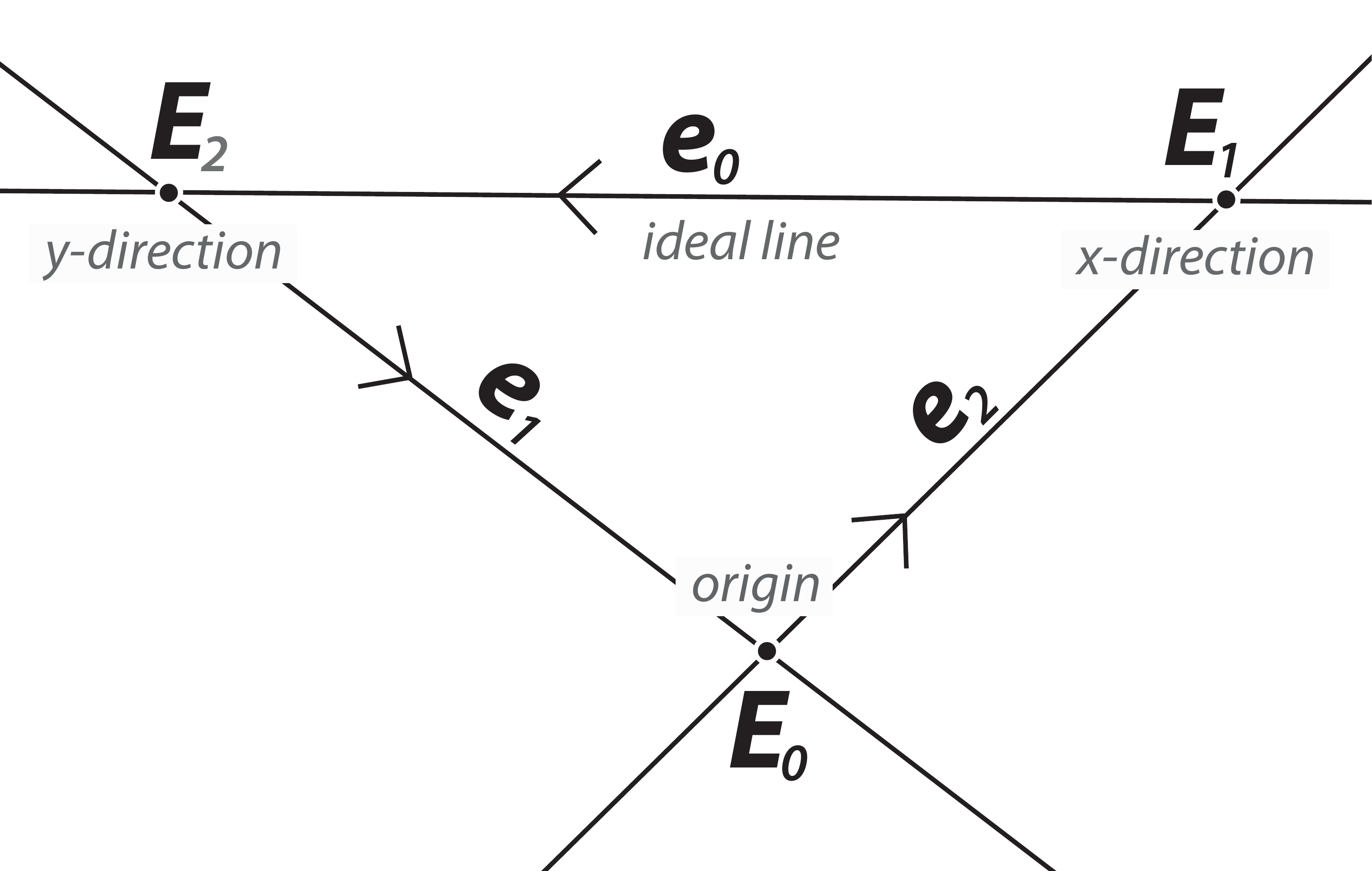}}
  \caption{Perspective view of basis 1- and 2-vectors }
\label{fig:emvl}
\end{figure}

We choose the basis 2-vectors 
\mydisp{\EE{0} := \e{1}\e{2}, ~~\EE{1} := \e{2}\e{0},~~ \EE{2} := \e{0}\e{1}} {} for the points of the plane.  It is easy to check that these satisfy \mydisp{\EE{0}^{2} = -1,~~~~\EE{1}^{2} = \EE{2}^{2} = 0,~~~\EE{i} \cdot \EE{j}= 0~~\text{for}~~i \neq j}{.}   

Hence the induced inner product on points has signature (0,1,2), more degenerate than that for lines.  
\ifthenelse{\equal{\isFV}{true}}{As a result, the distance function between points cannot be obtained from the inner product -- but \emph{can} be obtained via the geometric product;  see  \Sec{sec:pr2pts} below for details.}{}
Points that lie on $\vec{\omega}$ are said to be \emph{ideal}. Then $\EE{0}$ is the origin of the coordinate system, $\EE{1}$ is the {ideal} point in the $x$-direction and $\EE{2}$ is the ideal point in the $y$-direction.   In general, ideal elements can be characterized as elements satisfying $\vec{x}^{2}=0$.    See \Fig{fig:emvl}  for a perspective view of the fundamental triangle determined by these elements.

The basis vectors chosen above assume that the first coordinate is the \emph{homogeneous} coordinate. This assumption is helpful when stating results that should be valid for general dimensions.  On the other hand, existing usage often follows the opposite convention; for example, the line with equation $ax + by + cz = 0$ appears in the algebra as $c\e{0} + a\e{1} + b\e{2}$.  When writing elements of the algebra as tuples, we take into account this existing usage.  We write the 1-vector $\vec{m} = c\e{0} + a\e{1} + b\e{2} $ as $[a,b,c]$ (square brackets), and the 2-vector $\vec{P} = x\EE{1} + y\EE{2} + z\EE{0}$ as $(x,y,z)$ (standard parentheses).

 The pseudoscalar $\eye := \e{0}\e{1}\e{2}$ generates the grade-3 vectors.  It satisfies $\eye^{2} = 0$.  This is, the inner product, or metric, is \emph{degenerate}. 
 A 3-vector $\vec{p}$ has the form $a\eye$ for $a \in \mathbb{R}$. While in a non-degenerate metric the magnitude $a$ of a pseudoscalar $\vec{p}$ can be obtained, up to sign,  as $\vec{p}\eye$, this is not possible with a degenerate metric (since $\vec{p}\eye = a \eye^2 = 0$ for all $\vec{p}$), and we define the signed magnitude $S(\vec{p}) := a$.  We occasionally use the fact for a 1-vector $\vec{a}$ and a 2-vector $\vec{P}$, $S(\vec{a} \wedge \vec{P}) = \vec{a} \vee \vec{P}$.  (This follows from the fact if $\vec{x}\wedge\vec{y}$ is a pseudoscalar, $\vec{x}\vee\vec{y}$ is a scalar with the same magnitude.) Note that  $\EE{i}$ was chosen so that $\e{i}\EE{i}=\eye$. 

\subsection{The geometric product}
The full multiplication table for the basis elements of $\pdclal{2}{0}{1}$  can be found in \Tab{tab:cl201}.  The presence of 0's indicates that the metric is degenerate.
It is useful to have special symbols for the different grade components of the product of two blades, which we now provide.
Let $\vec{A}$ be a $k$-vector and $\vec{B}$, an $m$-vector.  All combinations of $(k,m)$ in $\pdclal{2}{0}{1}$ except $(2,2)$ can then be written as 
\[ \vec{A} \vec{B} = \vec{A}\cdot \vec{B} + \vec{A} \wedge \vec{B} \]
For $(k,m) = (2,2)$,  $\langle \vec{A} \vec{B} \rangle_{2} =: \vec{A} \times \vec{B} ~~(= \vec{A}\vec{B} - \vec{B}\vec{A}$), sometimes called the \emph{commutator} or \emph{cross}  product.  We'll see below that $ \vec{A} \times \vec{B} $ is the ideal point perpendicular to the direction of the joining line of $\vec{A}$ and $\vec{B}$. 

\fvonly{
\begin{table}[b]
\centering
\renewcommand{\arraystretch}{1.1}
\begin{tabularx}{.8\columnwidth} {| Y  || Y | Y  |  Y  | Y | Y | Y | Y | Y  |} \hline 
           & $\one$ & $\e{0}$ & $\e{1}$ & $\e{2}$ & $\EE{0}$ & $\EE{1}$ & $\EE{2}$ & $\eye$  \\ \hline \hline
$\one$        & $\one$ & $\e{0}$ & $\e{1}$ & $\e{2}$ & $\EE{0}$ & $\EE{1}$ & $\EE{2}$ & $\eye$  \\ \hline
$\e{0}$  & $\e{0}$ & $0$     & $\EE{2}$  & $-\EE{1}$ & $\eye$    & $0$      & $0$          & $0      $  \\ \hline
$\e{1}$  & $\e{1}$ & $-\EE{2}$ & $\one$& $\EE{0}$ & $\e{2}$ & $\eye$ & $-\e{0}$ & $\EE{1}$ \\ \hline
$\e{2}$  & $\e{2} $  & $\EE{1}$ & $-\EE{0}$ & $\one$ & $-\e{1}$ & $\e{0}$ & $\eye$ & $\EE{2}$ \\ \hline
$\EE{0}$  & $\EE{0}$ & $\eye$ & $-\e{2}$   & $\e{1}$   & $-\one$ & $-\EE{2}$ & $\EE{1}$ & $-\e{0}$  \\ \hline 
$\EE{1}$  & $\EE{1}$ & $0$     & $\eye$     & $-\e{0}$    & $\EE{2}$ & $0$ & $0$ & $0$  \\ \hline
$\EE{2}$  & $\EE{2}$ & $0$     & $\e{0}$   & $\eye$    & $-\EE{1}$ & $0$ & $0$ & $0$  \\ \hline
$\eye$    & $\eye$     & $0$     & $\EE{1}$ & $\EE{2}$ & $-\e{0}$ & $0$ & $0$ & $0$ \\ \hline
\end{tabularx}
\vspace{.1in}
\caption{Geometric product in $\pdclal{2}{0}{1}$}
\label{tab:cl201}
\end{table}
}


 \subsection{Normalized  points and lines}
 \label{sec:npl}
A $k$-vector whose square is $\pm1$ is said to be \emph{normalized}.  Since normalization simplifies the subsequent discussion, we introduce it here\fvsh{, although logically speaking the justification for all the steps in the normalization process will only later be established.}{.}
 The square of any $k$-vector in the algebra is a scalar, since all $k$-vectors in this algebra are simple. Squaring this product and rearranging terms, one obtains a product of the squares of these 1-vectors, each of which reduces to a scalar. For a euclidean line $\vec{m} = c \e{0} + a\e{1} + b\e{2}$, 
 define the norm \[\| \vec{m} \| := \sqrt{\vec{m}^{2}} = \sqrt{\vec{m}\cdot \vec{m}}~~~ (= \sqrt{a^{2}+b^{2}})\] Then $\vec{m}_{n} := {\| \vec{m} \|^{-1}}{\vec{m}}$ satisfies $\vec{m}_{n}^{2}=1$. For a euclidean point $\vec{P} = z\EE{0}+x\EE{1}+y\EE{2}$, $\vec{P}^{2} = -z^{2}$.  Define $\| \vec{P} \| := z$. Note that, in contrast to a standard norm of a vector space,  $\| \vec{P} \|$ can take on positive \emph{and} negative values, a feature that is occasionally useful. Then $\vec{P}_{n} := {z}^{-1}{\vec{P}}$ satisfies   $\| \vec{P}_{n}\| = 1$.  Such a point is also called \emph{dehomogenized} since its $\EE{0}$ coordinate is 1.  Note that we have shown that normalized euclidean lines have square 1 while normalized euclidean points have square -1.  In the following discussions we often assume that euclidean lines and points are normalized.   
 
 \fvonly{\subsubsection{Weight and norm} 
 \label{secwandf}
 If one has chosen a standard representative $\vec{X}$ for a projective $k$-vector,  and $\vec{Y} = \lambda \vec{X}$, we say that $\vec{Y}$ has \emph{weight} $\lambda$.  We usually choose the standard element to have norm $\pm 1$.  Such elements of weight $\pm 1$ are exactly the normalized elements discussed above.  The weight can be any non-zero real number; while the norm is sometimes restricted to take non-negative values (see \Tab{tab:overview} below). The freedom to choose the weight is a consequence of working in projective space, since non-zero multiples of an element are all projectively equivalent.  Sometimes the weight is irrelevant, sometimes  crucial.  When multiplying elements together, one gets the same projective result regardless of the weights; while adding elements, different weights give different projective results.  }
  
\subsubsection{Ideal elements and free vectors} Ideal points correspond to euclidean \quot{free vectors} (a fact already recognized in \cite{clifford73}). Let  $\vec{P} = a \EE{1} + b\EE{2}$ be an ideal point. Then, as noted above, $\| \vec{P} \| = 0$.  This leads us to  introduce a second norm for ideal points, one that is compatible with their function as free vectors.   Define the \emph{ideal} norm \[\| \vec{P} \|_{\infty} := \| \vec{P} \vee \vec{Q}\|\] where $\vec{Q}$ is \emph{any} normalized euclidean point. Then a direct calculation yields $\| \vec{P} \|_{\infty} = \sqrt{a^{2}+b^{2}}$, as desired.  Thus, the points of the ideal line can be treated as \emph{free vectors} with the  positive definite inner product of $\R{2}$ (signature $(2,0,0)$).   

We write the corresponding inner product between two ideal points $\vec{U}$ and $\vec{V}$ as $\langle \vec{U}, \vec{V} \rangle_{\infty}$.  Every euclidean line $\vec{m}$ has an ideal point $\vec{m}_{\infty}$,  normalized so that $\| \vec{m}_{\infty}\|_{\infty} = 1$.
The ideal norm allows us to represent ideal points  in the accompanying figures as familiar free vectors (arrows labeled with capital letters), see \Fig{fig:sumdiff} (right).  

\begin{table}[t]
\begin{centering}
\renewcommand{\arraystretch}{1.25}
\begin{tabular}{| c | c |  l  |  c | c |}\hline
\textbf{Grade} &  \textbf{Coord. \& tuple form} & \textbf{Norms} & \textbf{Domain} & \textbf{Description} \\ \hline
1 	& {\small $\vec{m} = a \e{1} + b \e{2} + c\e{0} $} & $\|\vec{m}\| := \sqrt{a^2+b^2}$ & $\| \vec{m} \| \neq 0$ & Euc. line \\ \cline{3-5}
 	&  $[a, b, c]$ & $\|\vec{m}\|_\infty := c$ & $\| \vec{m} \| = 0$ & Ideal line \\ \hline
2 	& {\small $\vec{P} = x \EE{1} + y \EE{2} + z\EE{0}$} & $\|\vec{P}\| := z$ & $\| \vec{P} \| \neq 0$ & Euc. point\\ \cline{3-5}
 	&  $(x,y,z)$  & $\|\vec{P}\|_\infty :=\sqrt{x^2+y^2}$ & $\| \vec{P} \| = 0$ & Ideal point  \\ \hline
3 	& $a\eye$  & $\|a\eye\|_\infty :=a$ & -all- & Pseudoscalar \\ \hline
\end{tabular}
\vspace{.1in}
\caption{Coordinate-based overview of the euclidean and ideal elements with their corresponding norms.}
\label{tab:overview}
\end{centering}
\end{table}

We also define an ideal norm for ideal lines (i. e., lines $\vec{m}$ satisfying $\vec{m}^2 = 0$).  For $\vec{m} =  a \e{1} + b \e{2} + c\e{0}$, $\| \vec{m} \|_\infty = c$. (As with $\| \vec{P} \|$ above, this can also take on positive \emph{and} negative values.) Then $\vec{m} = c\omega$.  $c>0$ corresponds to an ideal line in clockwise orientation; $c<0$, to counter-clockwise orientation.  Finally for completeness we can also consider the pseudoscalar signed magnitude $S(a\eye)$ as an ideal norm
: $\| a\eye \|_\infty := S(a\eye) = a$.  We have thus defined an ideal norm for all ideal elements in the algebra.  This ideal norm, restricted to the ideal plane, has signature $(2,0,0)$; considered projectively, this is an elliptic line $\pdclal{2}{0}{0}$, while considered as a vector space, it is $\dclal{2}{0}{0}$,  the geometric algebra of $\R{2}$.  See \Tab{tab:overview} for an overview of the euclidean and ideal elements and norms with their domains of validity.  

In the following, we will  more than once confirm that the standard and ideal norms form an organic whole. For a fuller discussion of the ideal norm see \S 4.4.4 of \gTh.

Whether to apply the standard or ideal inner product  presents no difficulties for practical implementation, as a point can be easily identified as ideal by the linear condition $\vec{P} \wedge \omega = 0$. There is also little danger that an ideal point will be mistaken for a euclidean point -- all the computational paths that produce ideal points presented in this article (see for example \Sec{sec:propseu}, \Sec{sec:pareuc}, \Sec{sec:twoeucpts}, and  \Sec{sec:sumdiffpts}) produce \emph{exact} ideal points. 
This situation is analogous to traditional vector algebra: one has no trouble distinguishing vectors and points. 

  
 \section{The geometric product in detail: 2-way products}
\label{sec:gpdetail}
 
  \fvonly{
    \begin{figure}
   \centering
{\setlength\fboxsep{0pt}\fbox{\includegraphics[width=.7\columnwidth]{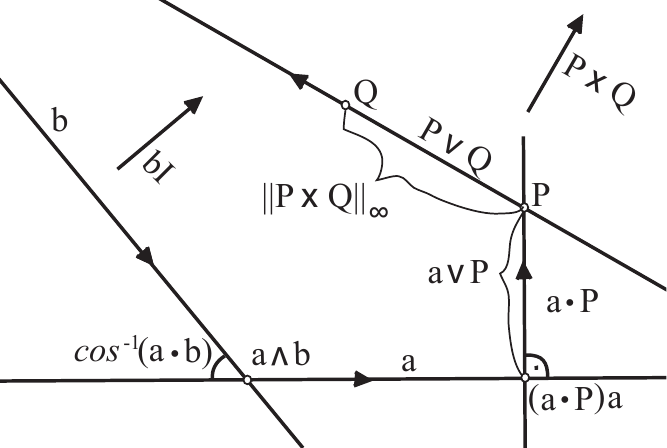}}}
\caption{Selected geometric products of blades.}
\label{fig:geomprod}
\end{figure}
}


In the following discussion,  $\vec{P}$ and $\vec{Q}$ are normalized points (either euclidean or  ideal, as indicated), and $\vec{m}$ and $\vec{n}$ are normalized lines. We analyze  the geometric meaning of products of pairs and triples of $k$-vectors of various grades, paying particular attention to the distinction of euclidean and ideal elements.  A selection of these products is illustrated in \fvsh{\Fig{fig:geomprod}}{\Fig{fig:sumdiff} (left)}. 
  
\subsection{Product with pseudoscalar}
\label{sec:propseu}
First notice that the pseudoscalar $\eye$ commutes with everything in the algebra.  For a  euclidean line $\vec{a}$,  the \emph{polar} point $\vec{a}^{\perp} := \vec{a}\eye = \eye \vec{a}$ is the ideal point perpendicular  to the line $\vec{a}$.  We can use the polar point to define a consistent orientation on euclidean lines; we draw the arrow on an oriented line $\vec{m}$ so that rotating it by $90^{\circ}$ in the CCW direction produces $\vec{m}^{\perp}$.  See \Fig{fig:emvl}, which shows the resulting orientations on the basis 1-vectors.  When $\vec{a}$ is normalized, so is $\vec{a}^{\perp}$, another confirmation that the two norms (euclidean and ideal) have been harmoniously chosen.  For a normalized  euclidean point $\vec{P}$,   $~~\vec{P}^{\perp} := \vec{P}\eye = \eye \vec{P} = -\e{0}$, the ideal line with CW orientation.  The polar of an ideal point or line is 0.  

 \fvonly{We noted above in \Sec{sec:bva} that the condition $\eye^{2}=0$ means the metric is degenerate, or, what is the same, multiplication by $\eye$ (the so-called metric polarity) is not an algebra isomorphism.  Although some researchers see this as a flaw in the algebra (for example, \cite{li08},  p. 11), our experience leads to view it as an advantage, since it accurately mirrors the metric relationships in the euclidean plane.  For example, when $\vec{m}$ and $\vec{n}$ are parallel, $\vec{m}^{\perp} = \vec{n}^{\perp}$, that is, parallel lines have the same polar point.  In a non-degenerate metric, however, different lines have different polar points.  In contrast, the degenerate metric properly mirrors this euclidean phenomenon. For a fuller discussion of this theme, see Sec. 5.3 of \cite{gunn2016}.}

\subsection{Product of two lines}
\label{sec:pr2lns}
  
   \begin{figure}
   \centering
{\setlength\fboxsep{0pt}\fbox{\includegraphics[width=.47\textwidth]{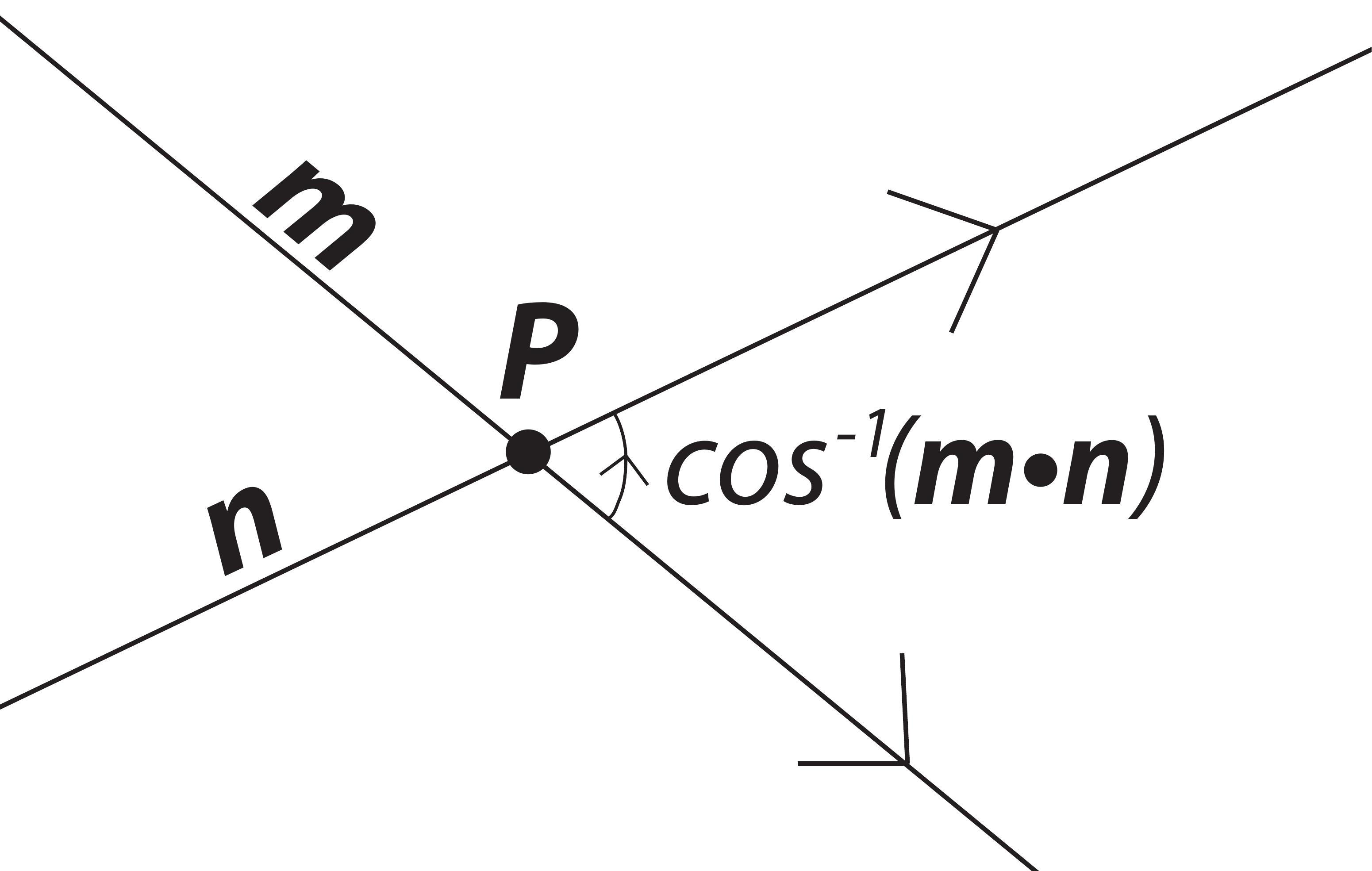}}} \hspace{.02\textwidth}
{\setlength\fboxsep{0pt}\fbox{\includegraphics[width=.47\textwidth]{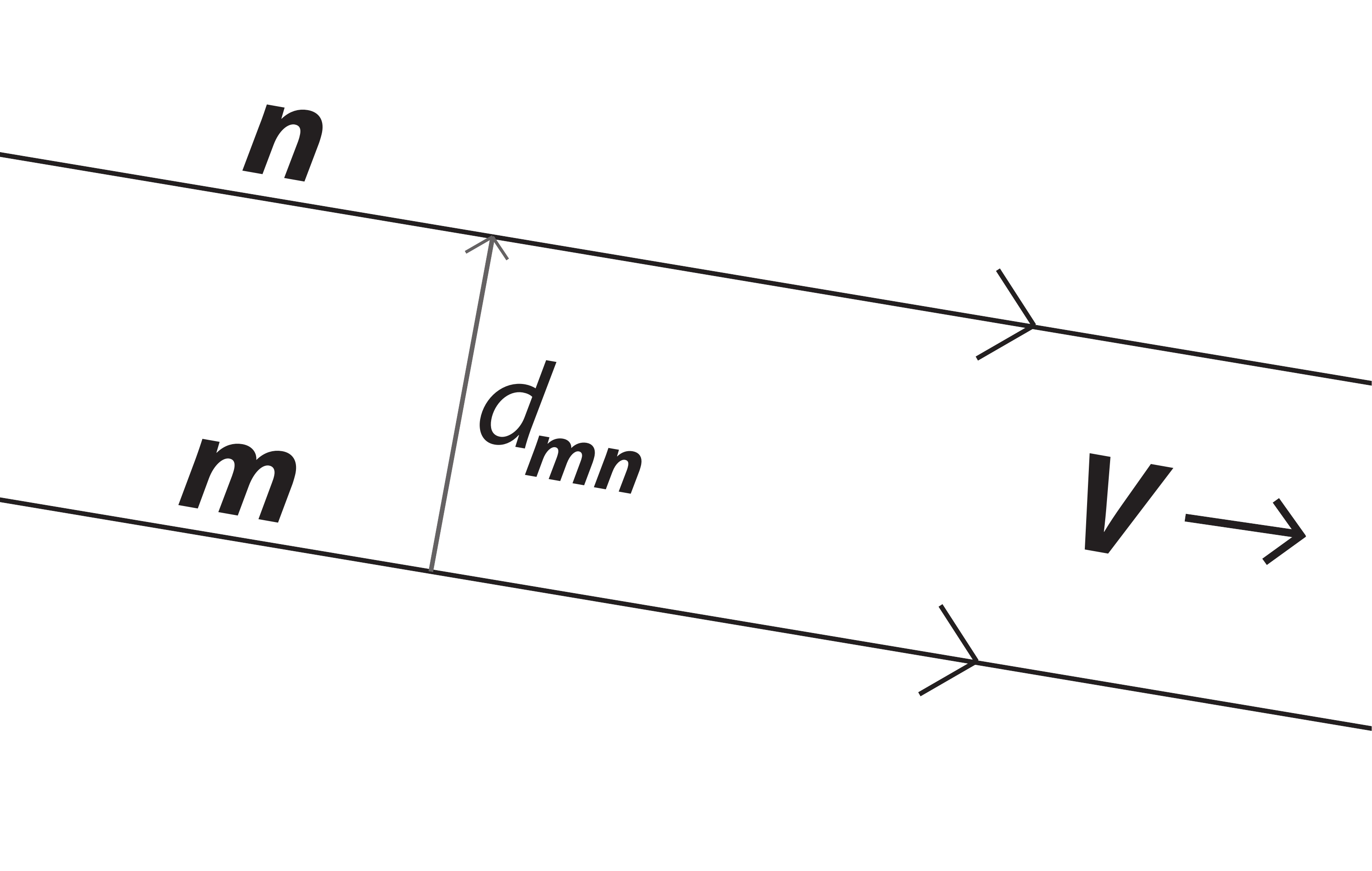}}}\caption{Geometric product $\vec{a}\vec{b}$  of two intersecting lines (left) and two parallel lines (right).}
\label{fig:twoLines}
\end{figure}

In general we have $\vec{m}\vec{n} = \langle \vec{m}\vec{n}\rangle_{0} +\langle \vec{m}\vec{n}\rangle_{2} =  \vec{m} \cdot \vec{n} + \vec{m} \wedge \vec{n}$.  We say  two lines are \emph{perpendicular} if $\vec{m} \cdot \vec{n} = 0$ -- even when one of the lines is ideal. The meaning of the two terms on the right-hand side depends on the configuration of $\vec{m}$ and $\vec{n}$ as follows. 

\subsubsection{Intersecting euclidean lines} We say that two intersecting euclidean lines meet at an angle $\alpha$ when a rotation of $\alpha$ around their common point brings the first oriented line onto the second, respecting the orientation.  
Then $\vec{m} \cdot \vec{n} = \cos{\alpha}$ and $\vec{m} \wedge \vec{n} = (\sin{\alpha}) \vec{P}$ where $\vec{P}$ is their normalized intersection point. Consult \Fig{fig:twoLines}, left. Readers who are surprised that the angle $\alpha$ can be deduced from the wedge product -- which doesn't depend on the metric -- are reminded that this is possible only because we have used the inner product to normalize the arguments in advance.   \fvonly{Without normalizing $\vec{m}$ and $\vec{n}$, the formulae are \[ \vec{m} \cdot \vec{n} = \|\vec{m}\| \|\vec{n}\| \cos{\alpha}~~~~\text{~~~~and~~~~}~~~~\vec{m} \wedge \vec{n} = \|\vec{m}\| \|\vec{n}\| (\sin{\alpha}) \vec{P}\] Similar extensions involving non-normalized arguments could be made for the subsequent formulae given below, but in the interests of space we omit them.} 

\myexercise$(\vec{m}\vec{n})^{n} = \cos{n \alpha} + (\sin{n \alpha}) \vec{P}$.  \fvonly{Show that the vector subspace generated by $1$ and $\vec{P}$ is isomorphic to the complex plane $\mathbb{C}$.} 

\subsubsection{Parallel euclidean lines} 
\label{sec:pareuc}
$\vec{m} \cdot \vec{n} = \pm1$.  We say the lines are \emph{parallel} when this inner product equals $1$, otherwise we say they are \emph{anti-parallel}.  In the latter case, replace $\vec{n}$ by $-\vec{n}$ to obtain parallel lines.  Then $\vec{m} \cdot \vec{n} = 1$ and $\vec{m} \wedge \vec{n} = d_{\vec{m}\vec{n}} \vec{m}_{\infty}$, where $d_{\vec{m}\vec{n}}$ is the oriented euclidean distance between the two  lines and $\vec{m}_\infty$.  See \Fig{fig:twoLines}, right. The simplicity of this formula validates the choice of the norm $\| \|_{\infty}$ on ideal points.   Note that the geometric product in PGA automatically finds the correct form of measuring the \quot{distance} between the two lines: the weight of the intersection point $\vec{m}\wedge\vec{n}$ reflects angle measurement $(\sin{\alpha})$ for intersecting lines and euclidean distance measurement ($d_{\vec{m}\vec{n}}$) for two parallel lines. 

\myexercise  $(\vec{m}\vec{n})^{n} = 1 + {n d_{\vec{m}\vec{n}} \vec{m}_{\infty}}$. 

\subsubsection{Product of a euclidean line with the ideal line} Let $\vec{n} =\vec{\omega}$ be the ideal line. Then $\vec{m} \cdot \vec{n} = 0$ and $\vec{m} \wedge \vec{n} = \vec{m}_{\infty}$ is the ideal point of $\vec{m}$.  Note that since $\vec{m} \cdot \vec{n} = 0$, the ideal line is perpendicular to every euclidean line; since it shares an ideal point with each such line, it is  parallel to every euclidean line!

\subsection{Product of two points}
\label{sec:pr2pts}
Here the general formula is $\vec{P} \vec{Q} =  \langle \vec{P}\vec{Q}\rangle_{0} +\langle \vec{P}\vec{Q}\rangle_{2} = \vec{P} \cdot \vec{Q} + \vec{P} \times \vec{Q}$.  \ifthenelse{\equal{\isFV}{true}}{The resulting behavior is characterized by the fact that the inner product for points is more degenerate than that for lines. }{}

   \begin{figure}
   \centering
{\setlength\fboxsep{0pt}\fbox{\includegraphics[width=.47\textwidth]{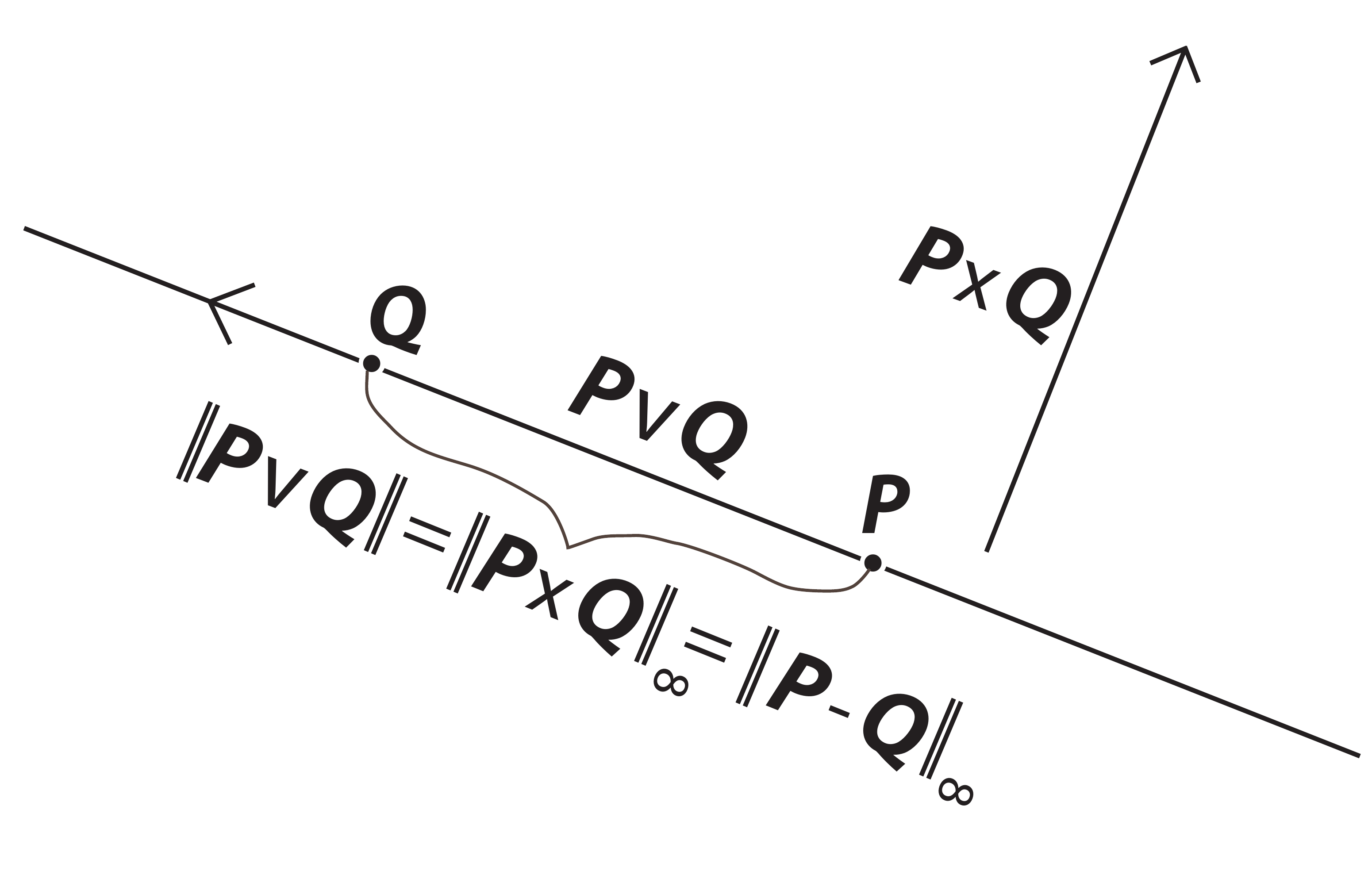}}} \hspace{.02\textwidth}
{\setlength\fboxsep{0pt}\fbox{\includegraphics[width=.47\textwidth]{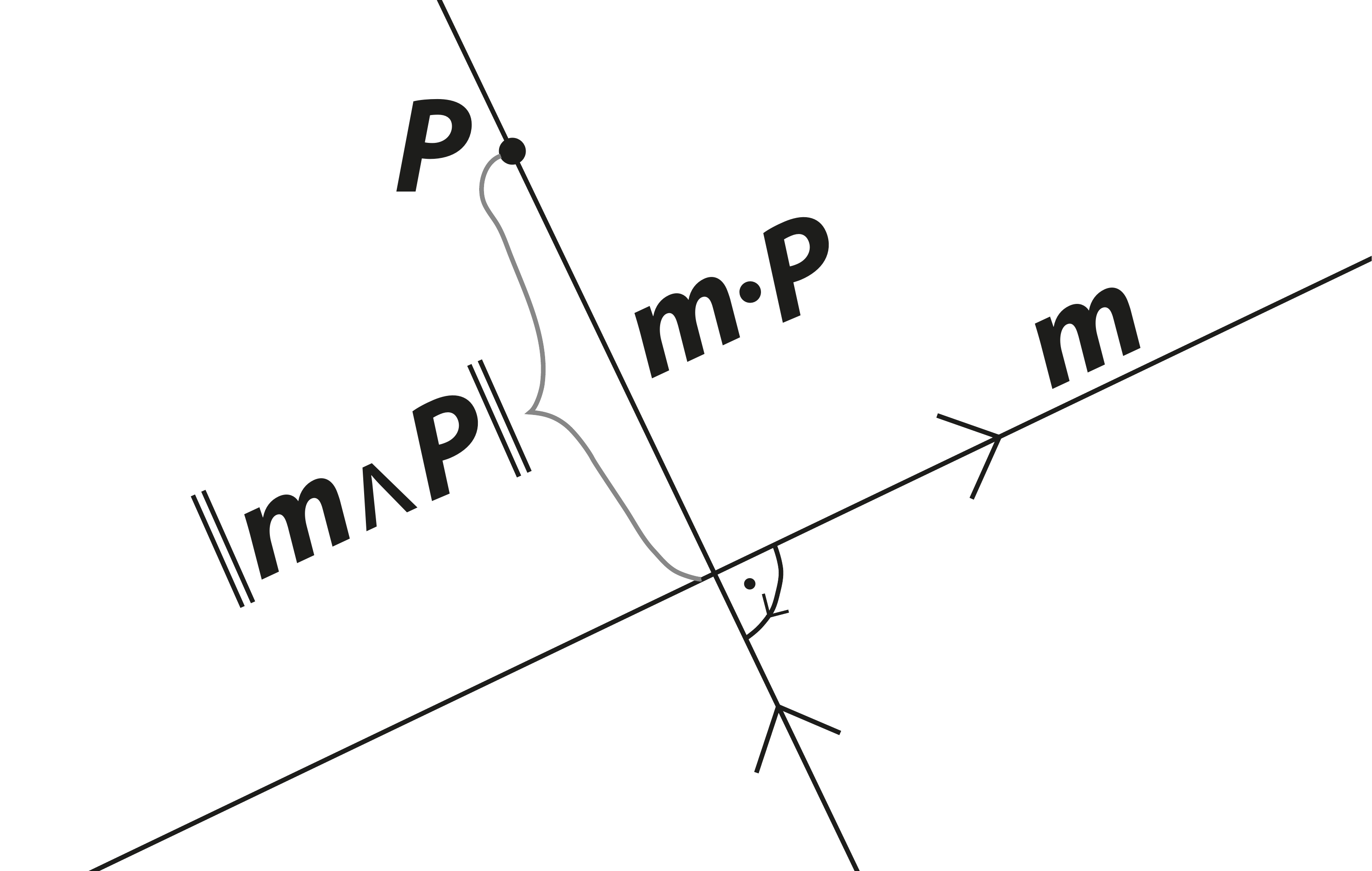}}}\caption{\textit{Left:}  product $\vec{P}\vec{Q}$ of two euclidean points; \textit{Right:}  product $\vec{a}\vec{P}$ of euclidean line and point.}
\label{fig:twoPoints}
\end{figure}

\subsubsection{Two euclidean points}   
\label{sec:twoeucpts}
$\vec{P} \cdot \vec{Q} = -1$ and $\vec{P} \times \vec{Q}$ is an ideal point perpendicular to $\vec{P} \vee \vec{Q}$. To be exact  $\vec{P} \times \vec{Q} = -(\vec{P} \vee \vec{Q})\eye$ (notice the negative sign). We also write this as $(\vec{P}-\vec{Q})^{\perp}$ since the ideal point $\vec{P} - \vec{Q}$, rotated in the CCW direction by $90^{\circ}$, yields $\vec{P}\times \vec{Q}$.   See \Fig{fig:twoPoints}, left. 
 
 \myexercise  The distance  $d_{\vec{P}\vec{Q}}$ between two euclidean points 
 satisfies \[d_{\vec{P}\vec{Q}} = \| \vec{P} \times \vec{Q} \| ~~(= \| \vec{P} \vee \vec{Q} \|)\]

\subsubsection{Euclidean point and ideal point}  If $\vec{Q}$ is ideal, then $\vec{P} \cdot \vec{Q} = 0$ and $\vec{P} \times \vec{Q}$ is the ideal point obtained by rotating $\vec{Q}$ $90^{\circ}$ in the CW direction.  This result is consistent with the characterization of the product of two euclidean points: it is an ideal point perpendicular to $\vec{P} \vee \vec{Q}$. $\vec{Q} \times \vec{P}$ rotates in the CCW direction. Thus, multiplication of an ideal point by any finite point rotates the ideal point by $90^{\circ}$; the specific location of the euclidean point plays no role.  
  
\subsubsection{Two ideal points} The product of two ideal points is zero.  Hence the only interesting binary operation on ideal points is addition.  In light of \Sec{secwandf}, this helps to explain why ideal points are often treated as vectors rather than projective points.


\subsection{Product of a line and a point}
\label{sec:prlnpt}
The general formula is $\vec{m} \vec{P} =  \langle \vec{m}\vec{P}\rangle_{1} +\langle \vec{m}\vec{P}\rangle_{3} = \vec{m} \cdot \vec{P} + \vec{m} \wedge \vec{P}$.  The wedge vanishes if and only if $\vec{P}$ and $\vec{m}$ are incident.  As before, we assume that both $\vec{m}$ and $\vec{P}$ are normalized.

\subsubsection{Euclidean line and euclidean point}   $ \vec{m} \cdot \vec{P} $ is the line passing through $\vec{P}$ perpendicular to $\vec{m}$ (consult \Fig{fig:twoPoints}, right). Why? This can be visualized as starting with all the lines through $\vec{P}$ and removing all traces of the line parallel to $\vec{m}$, leaving the line perpendicular to $\vec{m}$.  
It has the same norm as $\vec{m}$, and its orientation is obtained from that of $\vec{m}$ by CCW rotation of $90^{\circ}$.  This is reversed in the product $\vec{P} \cdot \vec{m}$.  This sub-product is important enough to deserve its own symbol.  We define \[\vec{m}^{\perp}_{\vec{P}} := \vec{m} \cdot \vec{P} = - \vec{P} \cdot \vec{m}\]  The wedge product satisfies $\vec{m}\wedge \vec{P} = d_{\vec{m}\vec{P}}\eye$, where $d_{\vec{m}\vec{P}}$ is the directed distance between $\vec{m}$ and $\vec{P}$.   

\subsubsection{Euclidean line and ideal point}  Let $\alpha$ be the angle between the direction of $\vec{m}$ and $\vec{P}$: $\cos{\alpha} = \langle \vec{m}_{\infty}, \vec{P} \rangle_{\infty}$. Then $ \vec{m} \cdot \vec{P} = (\cos{\alpha})\vec{\omega}$ and $\vec{m}\wedge \vec{P} = (\sin{\alpha}) \eye$.  
Notice that $\vec{m} \vec{P}$ is the sum of an  ideal line and a pseudoscalar: no euclidean point or line appears in the product. The first term, involving the ideal line, is non-zero when the ideal line is the only line through $\vec{P}$ perpendicular to $\vec{m}$.  When $\alpha = \frac{\pi}{2}$, every line through $\vec{P}$ is   perpendicular to $\vec{m}$, and $\vec{m} \cdot \vec{P} = 0$ while $\vec{m}\wedge \vec{P}  = \eye$.

 \section{The geometric product in detail: 3-way products}
\label{sec:threes}
Products of more than 2 $k$-vectors can be understood by multiplying the factors out, one pair at a time.  The product of 3 different euclidean points (or lines) is important enough in its own right to merit a separate discussion. The results provide a promising basis for  a future investigation of euclidean triangles.  Later we will see that euclidean reflections (\Sec{sec:refl}) and orthographic projection (\Sec{sec:orthpro}) can also be understood as 3-way products in which one of the factors is repeated.

  \begin{figure}
   \centering
{\setlength\fboxsep{0pt}\fbox{\includegraphics[width=.7\columnwidth]{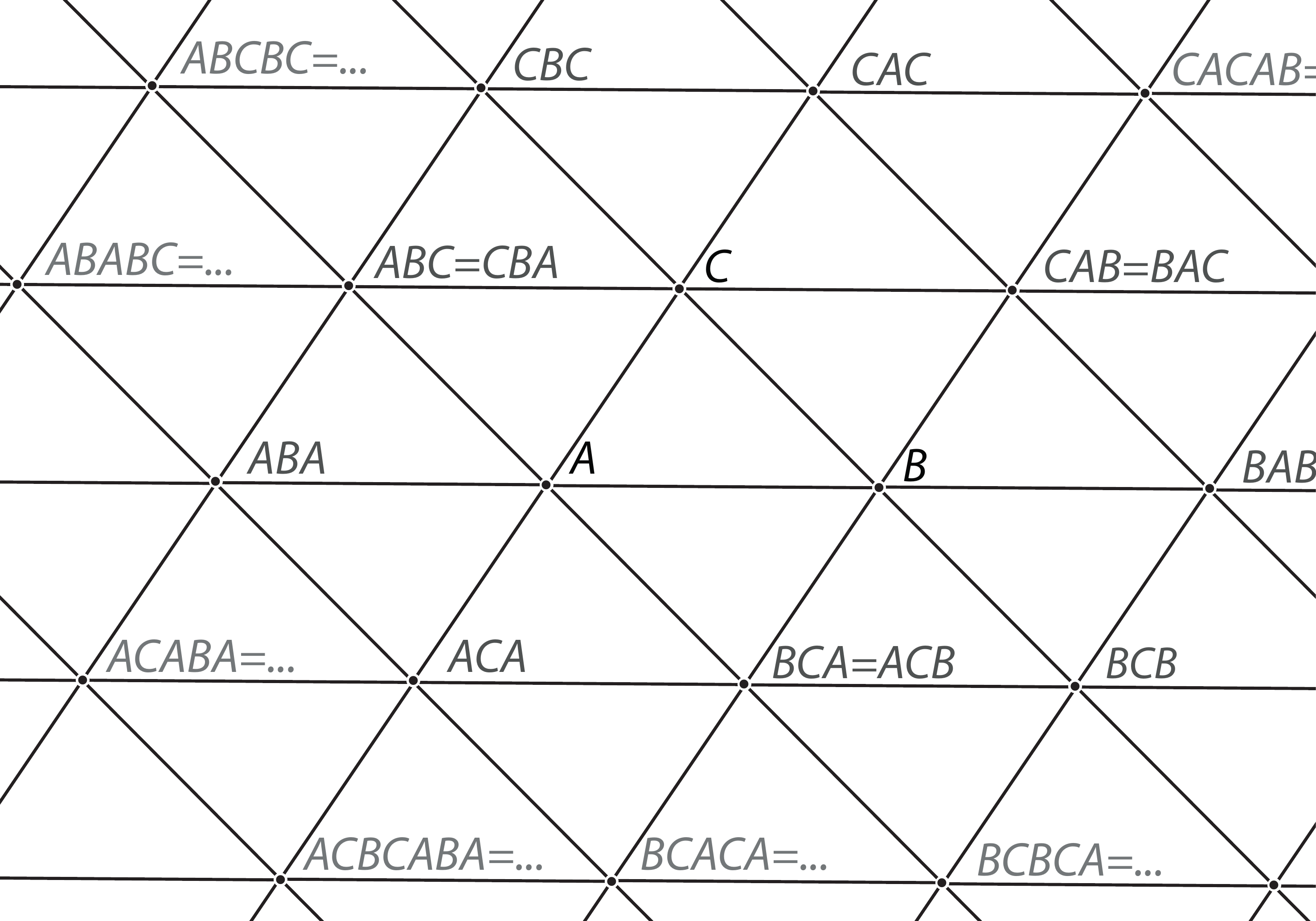}}}\hspace{.03in}
\caption{Products of 3 euclidean points}
\label{fig:trianglepts}
\end{figure}

 \subsection{Product of 3 euclidean points}
 Let the 3 points be $\vec{A}$, $\vec{B}$, and $\vec{C}$. See \Fig{fig:trianglepts}. Then using the results obtained above for products of two points:
 \begin{align*}
 \vec{A}\vec{B}\vec{C} &= (\vec{A} \vec{B})\vec{C} \\
 &= (-1 + (\vec{A} - \vec{B})^{\perp}) \vec{C} \\
 &= -\vec{C} - (\vec{A}-\vec{B}) \\
 &= \vec{A} -\vec{B} +\vec{C}
 \end{align*}
 The first and second steps follow from the results from \Sec{sec:pr2pts}.  The final equation indicates the projective equivalence of the two expressions, since multiplying by $-1$ does not effect the projective point.  The result is somewhat surprising, since the scalar part vanishes.  
 Hence, if one begins with the triangle $\vec{A}\vec{B}\vec{C}$ and generates a lattice of congruent triangles by translating the triangle along its sides, then the vertices of this lattice can be labeled by products of odd numbers of the vertices $\vec{A}$, $\vec{B}$, and $\vec{C}$ (\Fig{fig:trianglepts}).

 \myexercise  
 The product of an odd number of euclidean points is  a euclidean point that is the alternating sum of the arguments. 
 
  \begin{figure}[b]
   \centering
{\setlength\fboxsep{0pt}\fbox{\includegraphics[width=.7\columnwidth]{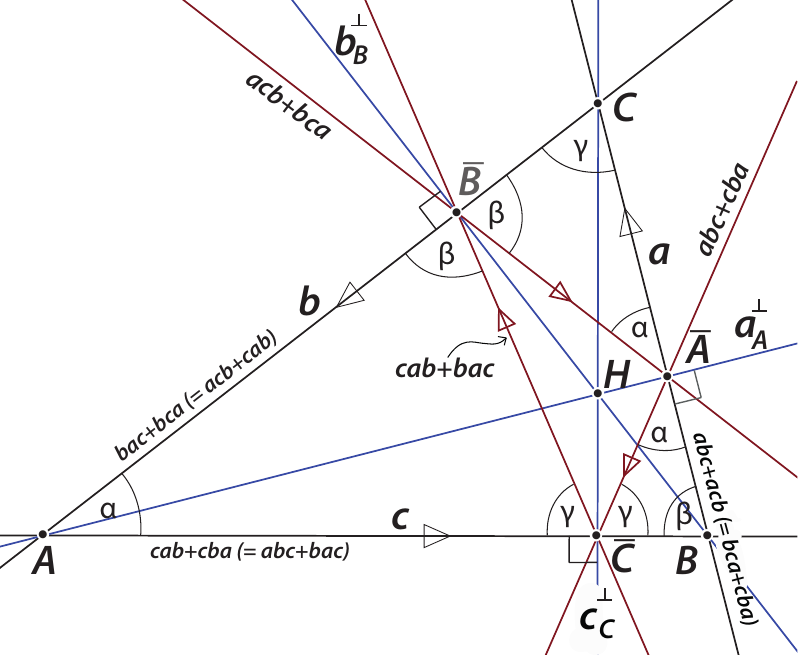}}}
\caption{Product of 3 euclidean lines.}
\label{fig:trianglelns}
\end{figure}

\subsection{Product of 3 euclidean lines}
 \label{sec:threelns} 
Let the 3 (normalized) lines be $\vec{a}$, $\vec{b}$, and $\vec{c}$ oriented cyclically.  See \Fig{fig:trianglelns}.  These three lines determine a triangle.  Then $\vec{a} \wedge \vec{b} = \sin{(\pi - \gamma)}\vec{C}$, etc., produces the interior angle $\gamma$ and the (normalized) vertex $\vec{C}$ of the triangle.  Using the results obtained above for products of two lines:
 \begin{align*}
 \vec{a}\vec{b}\vec{c} &= (\vec{a} \vec{b})\vec{c} \\
&= ( -\cos{\gamma})\vec{c} + (\sin{\gamma})(\vec{C} \vec{c})  \\
 &=  (-\cos{\gamma})\vec{c} + (\sin{\gamma})(\vec{C} \cdot \vec{c} + \vec{C}\wedge\vec{c}) \\
 &=  -((\cos{\gamma})\vec{c} + (\sin{\gamma})\vec{c}^{\perp}_{\vec{C}}) + \sin{\gamma}d_{\vec{C}\vec{c}}\eye 
 \end{align*}
  The first  step follows from the results from \Sec{sec:pr2lns}, the second and third from \Sec{sec:prlnpt}.   Let $\overline{\vec{C}}$ be the intersection of $\vec{c}$ and $\vec{C}\cdot\vec{c}$. In the last equation the expression in parentheses is the grade-1 part of the product: $\overline{\vec{b}} := \langle  \vec{a}\vec{b}\vec{c}  \rangle_{1}$. It is, by inspection, minus the result of rotating $\vec{c}$ around $\overline{\vec{C}}$ by $\gamma$.
Parenthesizing in a different order yields:
 \begin{align*}
 \vec{a}\vec{b}\vec{c} =\vec{a}(\vec{b}\vec{c}) &=  -((\cos{\alpha})\vec{a} - (\sin{\alpha})\vec{a}^{\perp}_{\vec{A}}) + \sin{\alpha}d_{\vec{A}\vec{a}}\eye
 \end{align*}
In this form, $\overline{\vec{b}} $ is minus the result of rotating $\vec{a}$ around $\overline{\vec{A}}$ by $-\alpha$.  Hence $\overline{\vec{b}} $ must be the joining line of $\overline{\vec{A}}$ and $\overline{\vec{C}}$. See \Fig{fig:trianglelns}.
\fvsh{

Since the grade-3 parts are equal, one obtains: \[(\sin{\gamma})d_{\vec{C}\vec{c}} = (\sin{\alpha})d_{\vec{A}\vec{a}}\]
This  illustrates an important technique for generating formulas in geometric algebra.  By applying the associative principle one can insert parentheses at different positions: \[ (\vec{a} \vec{b})\vec{c}= \vec{a}\vec{b}\vec{c} = \vec{a} ( \vec{b}\vec{c}) \]  The left-hand side and right-hand side represent different paths in the algebra to the same result, and these often produce non-trivial identities as this one. 
}{Since the grade-3 parts are equal, one also obtains: $(\sin{\gamma})d_{\vec{C}\vec{c}} = (\sin{\alpha})d_{\vec{A}\vec{a}}$.   Associativity of the geometric product yields many formulas of this sort. We meet this technique again in the discussion of orthographic projection in \Sec{sec:orthprod} below.}  

\myboldhead{Exercises} 1)  $\langle  \vec{a}\vec{b}\vec{c}  \rangle_{1}$ = $\frac{1}{2}(\vec{a}\vec{b}\vec{c} + \vec{c}\vec{b}\vec{a})$. 2) $\frac{1}{2}(\vec{c}\vec{a}\vec{b} + \vec{c}\vec{b}\vec{a}) = \cos(\gamma)\vec{c}  $. 3) Define \[\vec{s} := \vec{abc} + \vec{acb} + \vec{bac} + \vec{bca} + \vec{cab} + \vec{cba}\]  Show that $\vec{s}$ is a 1-vector, called the \emph{symmetric line} of the triple $\{\vec{a},\vec{b},\vec{c}\}$.  
 \Fig{fig:trianglelns} illustrates these relations, and illustrates how the geometric product in PGA produces compact and elegant expressions for familiar triangle constructions.  

\section{Distance and angle formulae}
\label{sec:daf}
We collect here the various distance formulae  encountered in the process of discussing the 2-way vector products above.  $\vec{P}$ and $\vec{Q}$ are normalized euclidean points, $\vec{U}$ and $\vec{V}$ are normalized ideal points, and $\vec{m}$ and $\vec{n}$ are normalized euclidean lines.  Space limitations prevent further differentiation with respect to signed versus unsigned distances. Consult \Fig{fig:geomprod}.
\begin{compactenum}
\item \textbf{Intersecting lines.} $\angle(\vec{m},\vec{n}) = \cos^{-1}{(\vec{m} \cdot \vec{n})} = \sin^{-1}{(\| \vec{m} \wedge \vec{n} \|)}$
\item \textbf{Parallel lines.} $d(\vec{m}, \vec{n}) = \|\vec{m} \wedge \vec{n}\|_\infty$
\item \textbf{Euclidean points.} $d(\vec{P},\vec{Q}) = \|\vec{P} \vee \vec{Q} \| = \| \vec{P} \times \vec{Q} \|_\infty$
\item \textbf{Ideal points.} $\angle(\vec{U}, \vec{V}) = \cos^{-1}(\langle \vec{U}, \vec{V} \rangle_{\infty})$
\item \textbf{Euclidean line, euclidean point.} $d(\vec{m}, \vec{P})  = -d(\vec{P}, \vec{m}) = S(\vec{m} \wedge \vec{P}) = \vec{m} \vee \vec{P} $
\item \textbf{Euclidean line, ideal point.} $\angle(\vec{m}, \vec{U}) = \cos^{-1}{(\| \vec{m} \cdot \vec{U} \|_{\infty})}$
\end{compactenum}
Notice that  a single expression in the geometric algebra produces several correct variants which take into account whether one or the other or both of the arguments are ideal.  For example, $ \|\vec{m} \wedge \vec{n}\|$ produces the intersection point of the two lines weighted by either the inverse of the sine of the angle (when the lines intersect), or the euclidean distance between them (when they are parallel).  Similar phenomena reveal themselves also in the next section.
\vspace{-.1in}
 
 \section{Sums and differences of points and of lines}
 \label{sec:sumdiff}
 Based on the discussion of the geometric product above,
it is instructive to examine sums and differences of points, resp. lines.  This deceptively simple theme reveals important distinctions between euclidean and ideal points and lines that play a central role throughout this algebra. It also highlights how traditional vector algebra can be directly accessed within $\pdclal{2}{0}{1}$ (as the weighted ideal points).   As before, all points and lines are assumed to be normalized unless otherwise stated. Consult \Fig{fig:sumdiff}.
 
  \begin{figure}[b]
   \centering
{\setlength\fboxsep{0pt}\fbox{\includegraphics[width=.4\columnwidth]{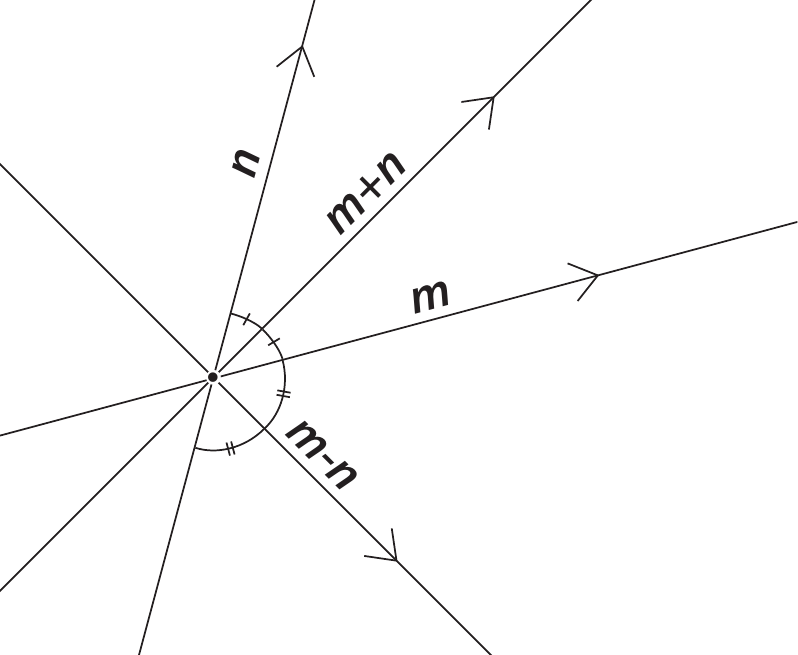}}\hspace{.1in}
{\setlength\fboxsep{0pt}\fbox{\includegraphics[width=.4\columnwidth]{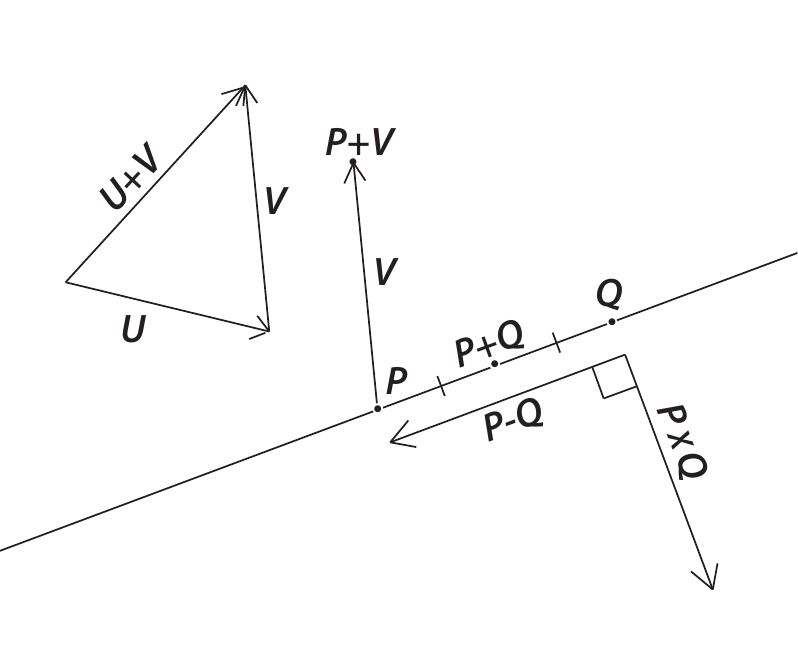}}}}
\caption{\emph{Left}: Sums and differences of normalized euclidean lines.  \emph{Right}: Sums and differences involving ideal points and normalized euclidean points. $\vec{P}  + \vec{Q}$ is the (non-normalized) midpoint of segment $\overline{\vec{P}\vec{Q}}$; $\vec{P}\times\vec{Q}$ is the ideal point $\vec{P}-\vec{Q}$ rotated $90^\circ$ CCW.}
\label{fig:sumdiff}
\end{figure}

\subsection{Sums and differences of lines} When $\vec{m}$ and $\vec{n}$ are both  euclidean,  and intersect in a euclidean point, then $\vec{m} + \vec{n}$ is their mid-line, the line through their common point $\vec{m} \wedge \vec{n}$ that bisects the angle between $\vec{m}$ and $\vec{n}$.  $\vec{m} - \vec{n}$  also passes through their common point, but bisects the supplementary angle between the two lines. (To establish the claim, consider the inner product of $\vec{m} \pm \vec{n}$ with each line separately.) If the two lines are parallel, then  $\vec{m} + \vec{n}$ is their mid-line: the line parallel to both, halfway in between them.    $\vec{m} - \vec{n}$ is the ideal line, weighted by the signed distance between the lines. If $\vec{m}$ is euclidean and $\vec{n} = \lambda \vec{\omega}$ is a weighted ideal line, then $\vec{m} + \vec{n}$ is a (normalized) euclidean  line representing the translation of the line $\vec{m}$ by a signed distance $\lambda$ in the direction perpendicular to its own direction (to  be exact, in the direction opposite its polar point $\vec{m}^{\perp}$).  
 
 \subsection{Sums and differences of points} 
 \label{sec:sumdiffpts}
 When $\vec{P}$ and $\vec{Q}$ are both euclidean, $\vec{P} + \vec{Q}$ is their mid-point.  ($\frac{\vec{P} + \vec{Q}}{2}$ is the normalized mid-point.)  $\vec{P} - \vec{Q}$ is an ideal point representing their vector difference.  If $\vec{P}$ is normalized euclidean and $\vec{V}$ is ideal (not necessarily normalized), then $\vec{P} \pm \vec{V}$ is a (normalized) euclidean  point representing the translation of the point $\vec{P}$ by the free vector $\pm\vec{V}$. If both $\vec{U}$ and $\vec{V}$ are ideal (again, not necessarily normalized), then $\vec{U} \pm \vec{V}$ is the ideal point representing their vector sum (difference).  Here we once again meet the $\R{2}$ vector space structure on the ideal line induced by the ideal norm. 

%

\vspace{-.1in}

\section{Isometries}
\label{sec:isom}
\fvsh
{
Equipped with our detailed knowledge of 2-way products we now turn to discuss how to implement euclidean isometries in the algebra. 
Recall that the group of isometries of $\Euc{2}$ is generated by reflections in euclidean lines.  The product of an even number of reflections yields a direct (orientation-preserving) isometry (either a rotation or a translation), while an odd number produces an indirect (orientation-reversing) isometry. 
Also recall, that in the euclidean plane, every isometry can be written using 1, 2, or 3 reflections. 
We now show how to implement reflections using the geometric product, then extend this result to products of 2 and 3 reflections.

\subsection{Reflections}
\label{sec:refl}
Suppose $\vec{a}$ and $\vec{b}$ are two normalized euclidean lines, and let $R_{\vec{a}}(\vec{b})$ represent the reflection of $\vec{b}$ in $\vec{a}$.  Purely geometric considerations imply that $R_{\vec{a}}(\vec{b})$ is a line $\vec{x}$ satisfying $\vec{a} \cdot \vec{x} = \vec{a} \cdot \vec{b}$ and $\vec{a} \wedge \vec{x} = \vec{b} \wedge \vec{a}$. 

 \myexercise  Show that $\vec{x} := \vec{a}\vec{b}\vec{a}$ fulfils both conditions, satisfies $\vec{x} \neq \vec{b}$ when $\vec{a} \neq \vec{b}$ and hence is the desired reflection. 
  \begin{figure}
   \centering
{\setlength\fboxsep{0pt}\fbox{\includegraphics[width=.6\columnwidth]{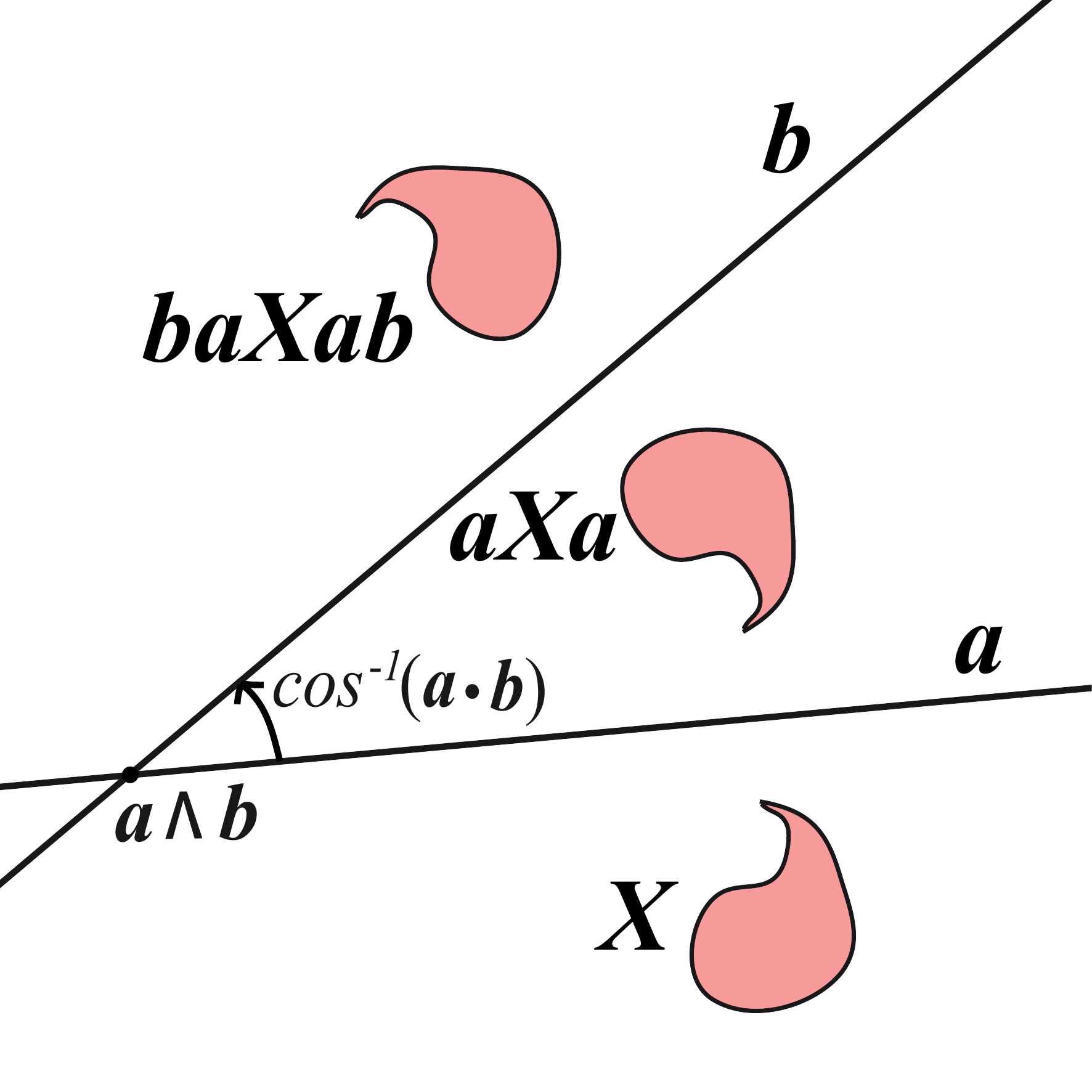}}}
\caption{The reflection in the line $\vec{a}$ is implemented by the sandwich $\vec{a}\vec{X}\vec{a}$; the product of the reflection in line $\vec{a}$ followed by reflection in (non-parallel) line $\vec{b}$ is a rotation around their common point $\vec{a}\wedge\vec{b}$ through $2 \cos^{-1}(\vec{a}\cdot\vec{b})$.}
\label{fig:rotationab}
\end{figure}

Notice that a reflection can then be seen as a special form of a 3-way product in which the first and third term is the same line.
 We write the reflection operator $\vec{b} \rightarrow \vec{a}\vec{b} \vec{a}$ as $\overline{\vec{a}}(\vec{b})$.   We sometimes refer to this as a \emph{sandwich} operator since the $\vec{a}$ \quot{sandwiches} the operand $\vec{b}$ on both sides. 

\myexercise  Show that $\overline{\vec{a}}(\vec{P})$ is also a reflection applied to a euclidean point $\vec{P}$. [Hint: Write $\vec{P}= \vec{m}\vec{n}$ for orthogonal $\vec{m}$ and $\vec{n}$.] 

\subsection{Product of two reflections}
\label{sec:prtworef}
Before we discuss the product of several  reflections,  we introduce some terminology. The product of any number of euclidean lines is called a \emph{versor};  the product of an even number is called a \emph{rotor}.  Versors and rotors are important since sandwich operators based on them yield euclidean isometries.

The concatenation of two reflections  in lines $\vec{a}$ and $\vec{b}$ can be written \[\overline{\vec{b}}(\overline{\vec{a}}(\vec{x})) = \vec{b}(\vec{a}\vec{x}\vec{a})\vec{b} = (\vec{b}\vec{a})\vec{x}(\vec{a}\vec{b})\]  where the expression on the right is obtained by applying associativity to the middle expression.  Define  $\vec{r} := \vec{b}\vec{a}$, and an operator $\overline{\vec{r}}(\vec{x}):= \vec{r} \vec{x}\widetilde{\vec{r}}$ which represents the composition of these two reflections expressed using the rotor  $\vec{r}$. Such a composition can take two forms, depending on the position of the lines.  

When the lines intersect in a euclidean point, then $\overline{\vec{r}}$  is a rotation around that point by twice the angle between the lines.  See \Fig{fig:rotationab}.  When the lines are a parallel, $\overline{\vec{r}}$ is a translation by twice the distance between the lines in the direction perpendicular to the direction of the lines.  The details can be confirmed by applying the results above involving products of two lines  in \Sec{sec:pr2lns} to write out $\vec{r}$ for these two cases and then by multiplying out the resulting sandwich operators.  The rotor for a rotation is called a \emph{rotator}; for a translation, a \emph{translator}.

\myexercises 1) Show that for a translator $\vec{t}$, $\vec{t}\vec{x} = \vec{x}\widetilde{\vec{t}}$ represents half the translation of the sandwich $\overline{\vec{t}}(\vec{x})$. That is, translators also make good \quot{open-faced} sandwiches. 2) Discuss the rotator $\cos{\alpha} + (\sin{\alpha})\vec{P}$ when $\alpha = \frac{\pi}{2}$. 

\subsection{Product of 3 reflections}
First, recall that a \emph{glide reflection} is an isometry formed by a reflection in a euclidean line (the \emph{axis} of the glide reflection) and a translation parallel to this line (the order of execution doesn't matter, since the two operations commute).
We begin by showing that the sandwich operator generated by the sum of a 1-vector and a 3-vector (line and pseudoscalar) corresponds to a glide reflection along the line.  Let $\vec{r} = \langle \vec{r} \rangle_{1} + \langle \vec{r} \rangle_{3} =\vec{m}  + \lambda \eye$ where $\vec{m}$ is normalized. Then for an arbitrary line $\vec{x}$:
\begin{align*}
\overline{\vec{r}}(\vec{x}) &= \vec{r} \vec{x} \widetilde{\vec{r}} \\
&= (\vec{m} + \lambda \eye) \vec{x} (\vec{m} - \lambda \eye) \\
&= \vec{m} \vec{x} \vec{m} + \vec{m}\vec{x}\lambda \eye - \lambda \eye \vec{x} \vec{m} - \lambda^{2}\eye^{2}\\
&= \vec{m} \vec{x} \vec{m} + \lambda \vec{m} \vec{x}^{\perp} - \lambda \vec{x}^{\perp} \vec{m} \\
&= \overline{ \vec{m}}(\vec{x}) + \lambda(\vec{m}\vec{x}^{\perp} - \vec{x}^{\perp}\vec{m})\\
&= \overline{ \vec{m}}(\vec{x}) + 2 \lambda(\vec{m} \cdot \vec{x}^{\perp}) \\
&=  \overline{ \vec{m}}(\vec{x})  + 2 \lambda (\cos{\alpha})\vec{\omega} 
\end{align*}
\begin{figure}
  \centering
    \setlength\fboxsep{0pt}\fbox{\includegraphics[width=0.66\textwidth]{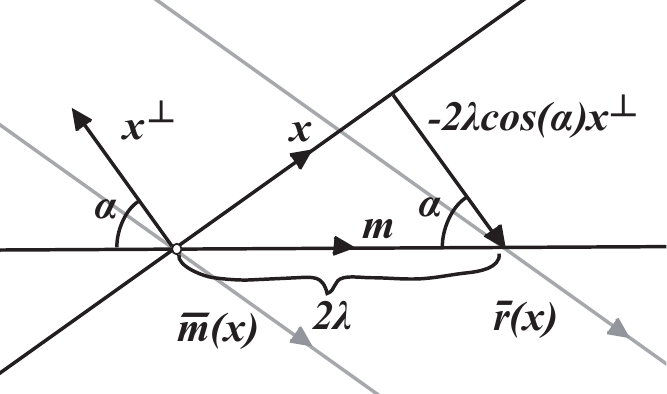}}
  \caption{Glide reflection generated by $\vec{r} = \vec{m} + \lambda \eye$ applied to line $\vec{x}$.}
\label{fig:glideref}
\end{figure}
The steps in the calculation follow from the discussion of the 2-way products above.
The result consists of two terms. The first term is the reflection of $\vec{x}$ in the line $\vec{m}$; by \Sec{sec:sumdiff} above, the second term represents the translation of the reflected line perpendicular to its own direction by the distance $2\lambda \cos(\alpha)$.   
The translation component reveals itself more clearly by considering $\overline{\vec{r}}(\vec{X})$ for an arbitrary \emph{point} $\vec{X}$.  A calculation similar to the above yields:
\begin{align*}
\overline{\vec{r}}(\vec{X}) &= ... = \overline{ \vec{m}}(\vec{X}) + 2 \lambda(\vec{m} \wedge \vec{X}^{\perp}) \\
&=  \overline{ \vec{m}}(\vec{X})  + 2 \lambda (\vec{m} \wedge \vec{\omega}) \\
&=  \overline{ \vec{m}}(\vec{X})  + 2 \lambda (\vec{m}_{\infty}) 
\end{align*}
In this form it is clear that the translation component is $2\lambda \vec{m}_{\infty}$: a translation in  the direction  of the line $\vec{m}$ through a distance $2\lambda$.  Consult \Fig{fig:glideref}.

Applying this to the situation of 3 reflections:  By \Sec{sec:threelns} above, the product of three lines has the form $\vec{r} = \vec{a}\vec{b}\vec{c} = \overline{\vec{b}} + \sin{(\alpha)}d_{\vec{a}\vec{A}}\eye$, hence the above results can be applied. Recall that $\overline{\vec{b}}$ is the joining line of $\overline{\vec{A}}$ and $\overline{\vec{C}}$, the feet of the altitudes from $\vec{A}$ and $\vec{C}$, resp. Refer to \Fig{fig:trianglelns}.  
%

\subsection{Exponential form for direct isometries}

It's not necessary to write a rotator as the product of two lines.  If one knows the desired angle of rotation, one can generate the rotor directly from the fixed point $\vec{P}$ of the rotation.   We know that it is normalized so that $\vec{P}^{2}=-1$.  Then, using a well-known technique of geometric algebra, one looks at the exponential power series $e^{t\vec{P}}$ and shows, in analogy to the case of complex number $i^{2}=-1$, that $e^{t\vec{P}} = \cos{t} + (\sin{t}) \vec{P}$. The right-hand side we already met above as the product of two euclidean lines meeting in the point $\vec{P}$ at the angle $t$.  Setting $t=\alpha$ one obtains the rotor $\vec{r}$ from the previous paragraphs.  What's more, letting $t$ take values from $0$ to $\alpha$ one obtains a smooth interpolation between the identity map and the desired rotation. Note that this sandwich operator rotates through the angle $2 \alpha$; to obtain a rotation of $\alpha$ around $\vec{P}$, set $\vec{r} = e^{\frac{\alpha \vec{P}}{2}}$. 

\myexercise  Carry out the same analysis for an ideal point $\vec{V}$ to obtain an exponential form for a translator that moves a distance $d$ in the direction perpendicular (CCW) to $\vec{V}$. [Answer: $e^{\frac{d \vec{V}}{2}} = 1+\frac{d}{2}\vec{V}$.] 
}
{
Now we have introduced the geometric product in detail, we recall the main facts about implementing euclidean isometries  in $\pdclal{2}{0}{1}$.  
\begin{compactenum}
\item \textbf{Reflections.}  All isometries are generated by the reflections in euclidean lines.  The reflection in the line $\vec{a}$ is given by the sandwich $\overline{\vec{a}}(\vec{X}) : =  \vec{a}\vec{X} \vec{a}$, where $\vec{X}$  is a $k$-vector of any grade.
\item \textbf{Rotors.} The composition of an even number of reflections is a direct isometry, either a rotation or a translation.  The product of an even number of $1$-vectors  is called a \emph{rotor} and gives rise to a direct isometry  by the sandwich operator $\overline{\vec{g}}(\vec{X}) : =  \vec{g}\vec{X} \widetilde{\vec{g}}$. 
\item \textbf{Rotations.}  When the two lines meet in a euclidean point $\vec{P}$ and make an angle $\alpha$, then $\overline{\vec{g}}$ is a euclidean rotation around $\vec{P}$ with angle $2 \alpha$.   
\item \textbf{Translations.} When the two lines meet in an ideal point $\vec{V}$ and are a distance $d$ apart, then $\overline{\vec{g}}$ implements a euclidean translation perpendicular to $\vec{V}$ through a distance $2d$.  
\item \textbf{Exponential form.} For a rotational rotor,  $\vec{g} = e^{{\alpha \vec{P}}} = \cos{(\alpha)} + \sin{(\alpha)} \vec{P}$; for a translational one, $\vec{g} = e^{{d \vec{U}}} = 1 + d \vec{U}$.
\end{compactenum}
\myexercise  Every rotor can be written as the product of two lines. 

}

\section{Orthogonal projections and rejections}
\label{sec:orthpro}
When one has two geometric entities it is often useful to be able to express one in terms of the other. Orthogonal projection is one method to obtain such a decomposition. 
For example, in the familiar euclidean VGA $\clal{3}{0}{0}$, any vector $\vec{b}$ can be decomposed with respect to a second vector $\vec{a}$ as $\vec{b} = \alpha \vec{a} + \beta \vec{a}^\perp$ where $\alpha, \beta \in \mathbb{R}$ and $\vec{a}^\perp \cdot \vec{a} = 0$. These two terms are sometimes called the \emph{projection}, resp., \emph{rejection} of $\vec{b}$ with respect to $\vec{a}$.
The algebra $\pdclal{2}{0}{1}$ offers a variety of such decompositions which we now discuss, both for their utility as well as to gain practice in using the geometric product introduced above.  We can project a line onto a line or a point; and a point onto a line or a point.  As before all points and lines are assumed to be normalized.    Consult \Fig{fig:orthProj}. 

\fvonly{Each projection follows the same pattern: take a product of the form $\vec{X}\vec{Y}\vec{Y}$ and apply associativity to obtain $\vec{X}(\vec{Y}\vec{Y}) = (\vec{X}\vec{Y})\vec{Y}$.  Assuming normalized arguments, $\vec{Y}\vec{Y} = \pm1$, yielding $\vec{X} = \pm(\vec{X}\vec{Y})\vec{Y}$.  The right-hand side typically consists of two terms representing an orthogonal decomposition of the left-hand side.    Note that,  like the reflection in a line (in which the first and last factors are identical),  such projections can be considered as a special form of a 3-way product,  in which either the first two or the last two factors are identical.}  

\subsection{Orthogonal projection of a line onto a line}

Assume both lines are euclidean.
Multiply the equation $\vec{m}\vec{n} = \vec{m}\cdot \vec{n} + \vec{m} \wedge \vec{n}$ with $\vec{n}$ on the right and use $\vec{n}^{2}=1$ to obtain 
\begin{align*}
\vec{m} &= (\vec{m} \cdot \vec{n})\vec{n} + (\vec{m} \wedge \vec{n}) \vec{n} \\
&= (\cos{\alpha}) \vec{n} + (\sin{\alpha} )\vec{P}\vec{n} \\
&= (\cos{\alpha}) \vec{n} - (\sin{\alpha}) \vec{n}^{\perp}_{\vec{P}}
\end{align*}
Note that $\vec{P}\vec{n} =-\vec{n}^{\perp}_{\vec{P}}$ since $\vec{P}\wedge\vec{n} = 0$. Thus one obtains a decomposition of $\vec{m}$ as the linear combination of $\vec{n}$ and the perpendicular line $\vec{n}^{\perp}_{\vec{P}}$ through $\vec{P}$.  See \Fig{fig:orthProj}, left.

\myexercise  If the lines are parallel one obtains $\vec{m} = \vec{n} + d_{\vec{m}\vec{n}}\omega$.  

 \begin{figure}
  \centering
{\setlength\fboxsep{0pt}\fbox{\includegraphics[width=.32\columnwidth]{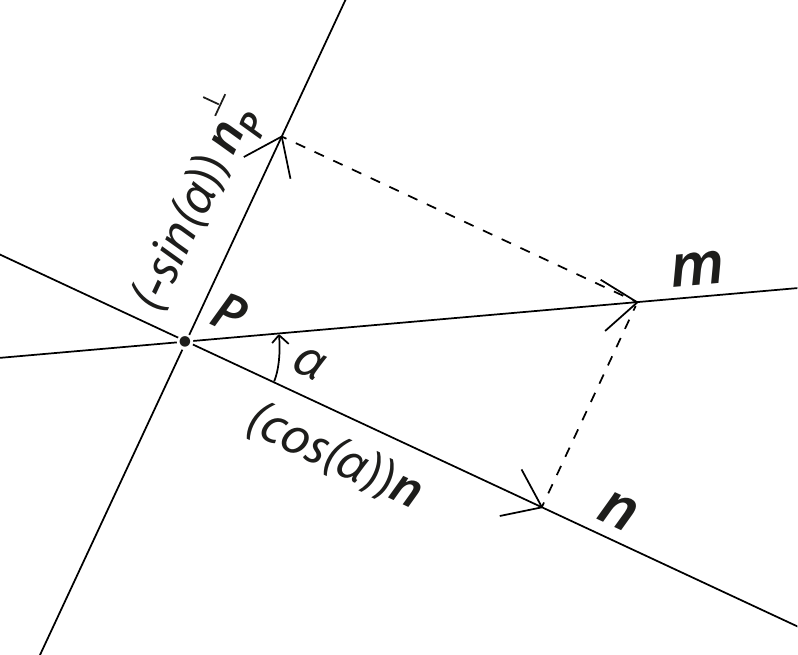}}}\hspace{.02in}
{\setlength\fboxsep{0pt}\fbox{\includegraphics[width=.32\columnwidth]{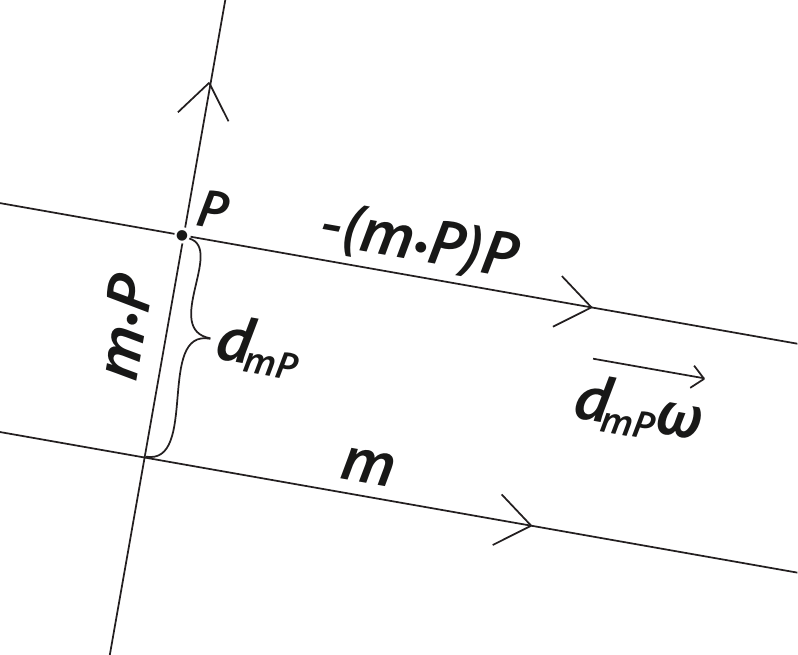}}}\hspace{.02in}
{\setlength\fboxsep{0pt}\fbox{\includegraphics[width=.32\columnwidth]{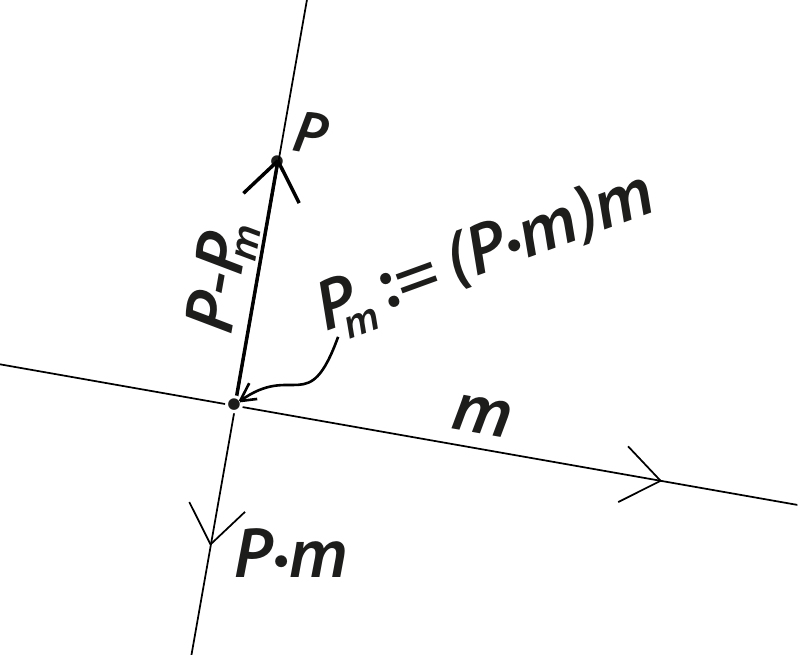}}}
\caption{Orthogonal projections (l. to r.): line $\vec{m}$ onto line $\vec{n}$,  line $\vec{m}$ onto point $\vec{P}$, and point  $\vec{P}$  onto line $\vec{m}$.}
\label{fig:orthProj}
\end{figure}


\subsection{Orthogonal projection of a line onto a point}

Assume both point and line are euclidean.
Multiply the equation $\vec{m}\vec{P} = \vec{m}\cdot \vec{P} + \vec{m} \wedge \vec{P}$ with $\vec{P}$ on the right and use $\vec{P}^{2}=-1$ to obtain 
\begin{align*}
\vec{m} &= -(\vec{m} \cdot \vec{P})\vec{P} - (\vec{m} \wedge \vec{P}) \vec{P} \\
&= -\vec{m}^{\perp}_{\vec{P}}\vec{P} - (d_{\vec{m}\vec{P}}\eye) \vec{P} \\
&= \vec{m}^{||}_{\vec{P}} - d_{\vec{m}\vec{P}} \vec{\omega} \\
\end{align*}
In the third equation, $\vec{m}^{||}_{\vec{P}}$ is the line through $\vec{P}$ parallel to $\vec{m}$, with the same orientation.  Thus one obtains a decomposition of $\vec{m}$ as the sum of a line through $\vec{P}$ parallel to $\vec{m}$ and a multiple of the ideal line (adding which, as noted above in \Sec{sec:sumdiff}, translates euclidean lines parallel to themselves). See \Fig{fig:orthProj}, middle.

\vspace{-.05in}

\subsection{Orthogonal projection of a point onto a line}

Assume both point and line are euclidean.
Multiply the equation \[\vec{m}\vec{P} = \vec{m}\cdot \vec{P} + \vec{m} \wedge \vec{P}\] on the left with $\vec{m}$ on the right and use $\vec{m}^{2}=1$ to obtain 
\begin{align*}
\vec{P} &=  \vec{m}(\vec{m} \cdot \vec{P})+ (\vec{m} \wedge \vec{P}) \\
&= \vec{m} (\vec{m}^{\perp}_{\vec{P}})+  \vec{m}(d_{\vec{m}\vec{P}}\eye) \\
&= \vec{P}_{m} + d_{\vec{m}\vec{P}}\vec{m}^{\perp} \\
&= \vec{P}_{m} + (\vec{P} - \vec{P}_{m})
\end{align*}
In the third equation, $\vec{P}_{m}$ is the point of $\vec{m}$ closest to $\vec{m}$.  The second term of the third equation is a vector perpendicular to $\vec{m}$ whose length is $d_{\vec{P}\vec{m}}$:  exactly the vector $\vec{P} - \vec{P}_{m}$.
 Thus one obtains a decomposition of $\vec{P}$ as the point on $\vec{m}$ closest to $\vec{P}$   plus a vector perpendicular to $\vec{m}$. See \Fig{fig:orthProj}, right.

\myexercise  Show that the orthogonal projection of a euclidean point $\vec{P}$ onto another euclidean point $\vec{Q}$ yields $\vec{P} = \vec{Q} + (\vec{P} - \vec{Q})$. 

\section{Worked-out example of euclidean plane geometry}
\label{sec:example}
We pose a problem in euclidean plane geometry on which to practice  the theory developed up to now:
\begin{quote}
Given a point $\vec{A}$ lying on an oriented line $\vec{m}$, and a second point $\vec{A}'$ lying on a second oriented line $\vec{m}'$, construct the unique direct isometry mapping $\vec{A}$ to $\vec{A}'$ and $\vec{m}$ to $\vec{m}'$.
\end{quote}
 \begin{figure}[b]
 \centering
{\includegraphics[width=.3\columnwidth]{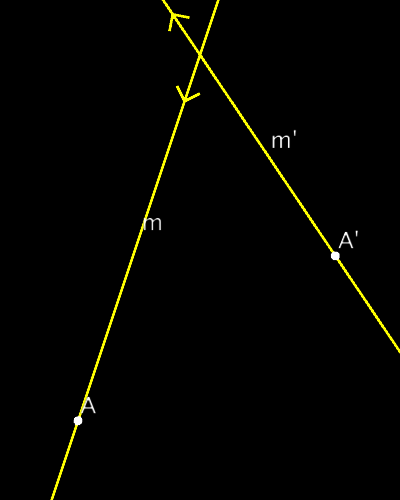}}\hspace{.1in}
{\includegraphics[width=.3\columnwidth]{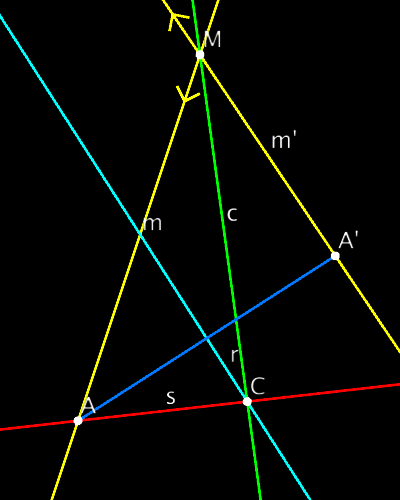}}\hspace{.1in}
{\includegraphics[width=.3\columnwidth]{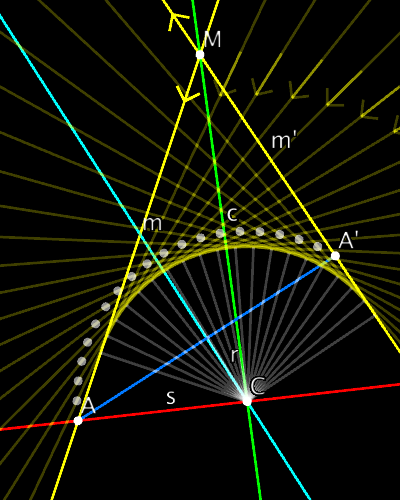}}
\caption{Left to right: the problem setting, the solution, interpolating the solution.}
\label{fig:ptlnpair}
\end{figure}
The problem is illustrated in \Fig{fig:ptlnpair} (left), including orientation on the two lines.  We assume the points and lines are normalized, and define to begin with the intersection point of the lines and the joining line of the points:
\begin{align*} 
\mathbf{M} &:= \mathbf{m} \wedge \mathbf{m'},\qquad
\mathbf{a} := \mathbf{A} \vee \mathbf{A'}   
\end{align*}
The direct isometry we are seeking is either a rotation or a translation.  In the former case, the center of rotation has to be equidistant from $\vec{A}$ and $\vec{A}'$, that is, it lies on the perpendicular bisector of the segment $\overline{\vec{A}\vec{A}'}$. To construct this we first obtain the midpoint, and then, applying \Sec{sec:prlnpt}, construct the perpendicular line through the midpoint:
\begin{align*}
\mathbf{A_m} &:= \mathbf{A} + \mathbf{A'}, \qquad
\mathbf{r} := \mathbf{A_m} \cdot \mathbf{a}~~ (=\mathbf{A_m} \mathbf{a})   
\end{align*}
The condition that $\vec{m}$ maps to $\vec{m}'$ implies that the center of rotation is the same distance from $\vec{m}$ as from $\vec{m}'$, that is, lies on the angle bisector of the two lines.  We choose the difference in order to respect the orientations of the lines, as the reader can readily confirm. The desired center is then the intersection $\vec{C}$ of $\vec{r}$ and $\vec{c}$.
\begin{align*}
\mathbf{c} &:= \mathbf{m} - \mathbf{m'}, \qquad  
\mathbf{C} := \mathbf{r} \wedge \mathbf{c}   
\end{align*}
The final step is to construct the desired isometry.  We can (for a rotation) find two lines through $\vec{C}$ that meet at half the desired angle of rotation: the line $\vec{A} \vee \vec{C}$ and the perpendicular bisector $\vec{r}$ satisfy this condition.  Then form the rotor of their product; the rotation is then the sandwich operator defined by this rotor.  
\begin{align*}
\mathbf{s} &:= \mathbf{A} \vee \mathbf{C}, \qquad 
\mathbf{g} := \mathbf{r}  \mathbf{s}, \qquad
\overline{\mathbf{g}}(\mathbf{X})  := \vec{g} \mathbf{X} \widetilde{\vec{g}}  
 \end{align*}
 One can also calculate the angle $\alpha$ between the two mirror lines from the equation $\cos{{\alpha}} = \vec{r} \cdot \vec{s}$, and use this to calculate $\vec{g}$ as an exponential: $\vec{g} = e^{\alpha \vec{C}}$. 

\myexercise  Show that the above construction also yields valid results when $\vec{C}$ is ideal, and that the resulting isometry is a translation. 

\section{Directions for further study}
\label{sec:neg}

\def\MN{metric-neutral\xspace}

For readers who are intrigued by the approach presented here, there are several natural directions for further study.  If one wants to stay within plane geometry, there are many themes that could be cast into the PGA format. For example,  one could explore calculus and differentiation in the plane, including point-wise and line-wise curves, point- and line-valued functions, etc.  For a general introduction to differentiation in geometric algebra see \cite{dfm07}, Ch. 8.  This could lead to a treatment of 2D kinematics and rigid body dynamics.  Or, one could use the discussion of three-way products in \Sec{sec:threes} as a starting point for formulating the theory of triangles and triangle centers in this language.  One practical direction would be to apply the theory sketched here as a framework for 2D graphics programming.

Another natural direction  is to move from 2 to 3 dimensions and explore the euclidean PGA $\pdclal{3}{0}{1}$ for euclidean 3-space $\Euc{3}$. Available resources include \cite{gunnThesis} (Ch. 7), \cite{gunn2011}, and \cite{gunnFull2010}.  While many results presented here generalize without surprises to 3 dimensions, one conceptual challenge presented in moving to 3 dimensions is that the space of bivectors, crucial to kinematics and dynamics, is no longer exhausted by the simple bivectors (which in this case represent lines in 3-space); the non-simple bivectors, known classically as \emph{linear line complexes}, exhibit much more complex -- and interesting -- behavior. An exhaustive treatment of the geometric product modeled on the one presented in the first half of this article would accordingly yield a richer, more complicated picture.

  \begin{figure}[t]
   \centering
{\setlength\fboxsep{0pt}\fbox{\includegraphics[width=.6\columnwidth]{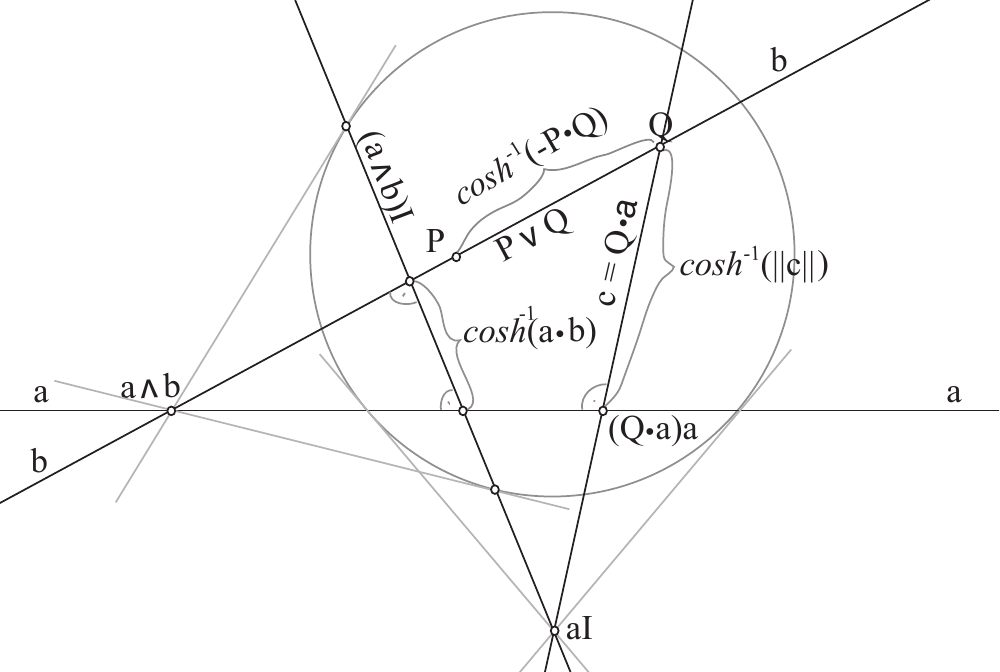}}}
\caption{Using $\pdclal{2}{1}{0}$ to do hyperbolic plane geometry.}
\label{fig:hypplane}
\end{figure}

Practitioners of non-euclidean geometry may be interested to know that the approach outlined here for the euclidean plane  can be carried out analogously for the hyperbolic and elliptic planes using the algebras $\pdclal{3}{0}{0}$, \textit{resp}., $\pdclal{2}{1}{0}$.\footnote{We favor using the dual construction here also (even though it is not strictly required) since then reflections in lines are represented by sandwiches with 1-vectors.  In the standard approach, where 1-vectors are points, such sandwiches represent the less familiar, less practical \quot{point reflections}.} 
Most of the features discussed above for the euclidean plane have non-euclidean analogies which possess a similar elegance and succinctness. An introduction to these metric planes is given in Ch. 6 of \cite{gunnThesis}, from which \Fig{fig:hypplane} is taken. This presents a \emph{\MN} approach, that is, results are stated whenever possible without specifying the metric. 

\vspace{-.1in}

\section{Evaluation and conclusion}
\label{sec:eac}
We  have shown that traditional euclidean plane geometry can be formulated in a compact and elegant form using $\pdclal{2}{0}{1}$.  We have successfully applied the algebra  to a variety of practical problems of plane geometry and have encountered no obstacles to the program of extending it to all aspects of euclidean plane geometry.  

How do these results compare to existing approaches?  Plane geometry is usually handled with a mixture of analytic geometry, linear algebra, and vector algebra.  The foregoing has established that  $\pdclal{2}{0}{1}$ offers a variety of desirable \quot{infrastructure} features which this mixed approach does not offer:
\begin{compactenum}
\item It is coordinate-free (for details see Appendix \ref{sec:cfd}).
\item Points and lines are equal citizens, rather than  lines being defined in terms of points.
\item \emph{Ideal} elements are integrated organically, both in incidence (intersection of parallels) and metric relations.
\item Join and meet operators are obtained from the Grassmann algebra.
\item Isometries are represented by versor sandwich operators that act uniformly on primitives of all grades.  The rotors have an exponential representation.
\item The geometric product provides a rich, interrelated family of formulas for distance and angle integrating seamlessly both euclidean and ideal elements.
\end{compactenum}

The last point above reflects a novel feature of $\pdclal{2}{0}{1}$ of special note: euclidean and ideal elements are tightly interwoven in an organic whole.  See for example the discussion of the 2-way products in \Sec{sec:gpdetail} and the collection of formulas in \Sec{sec:daf}. This tight integration is, to the best of our knowledge, available nowhere else.  We think it deserves to be better known and understood.  The discussion in \Sec{sec:eipfromiip}  below makes a modest start towards a deeper  understanding.

Implementing this algebra within modern programming languages presents no significant challenges.  The author has implemented it in Java, JavaScript, and Mathematica (at different times, for different purposes) and  successfully applied the resulting toolkit to a variety of practical geometric and graphical problems. The resulting infrastructure gains, in comparison to traditional approaches, have been gratifying.

How does  $\pdclal{2}{0}{1}$ compare to the other two geometric algebras mentioned at the beginning of the article?  \cite{calvet2007} is a treatment of plane geometry based on $\clal{2}{0}{0}$.  While entirely appropriate as an introduction to GA at the high school level, it  makes extensive use of non-GA techniques to overcome the limitations of $\clal{2}{0}{0}$, which unlike the euclidean plane contains a distinguished point (the origin), and can by itself model neither parallelism nor translations.  One of the leitmotifs of this article has been to show how $\clal{2}{0}{0}$ is embedded organically within  $\pdclal{2}{0}{1}$ as the ideal line $ \vec{\omega}$, so all the features of $\clal{2}{0}{0}$ can be accessed easily in the model presented here.
We are not aware of an analogous treatment of plane geometry in CGA to the one presented here.  \cite{gunn2016} provides a general comparison of CGA and PGA for euclidean geometry and establishes that for flat geometric primitives, such as the domain of classical plane geometry treated in this article, PGA displays advantages over CGA with regard to robustness, simplicity of representation, and ease of learning.

To sum up:  we have demonstrated that the model of plane euclidean geometry provided by PGA is complete, compact, computable, and elegant.  Whether considered pedagogically, practically, or scientifically, we believe PGA provides a viable alternative to traditional approaches to euclidean plane geometry.   By helping to modernize the teaching of euclidean geometry, it could make an important contribution to the task mentioned at the beginning of the article, of bringing the dramatic advances in 19th century mathematics in geometry and algebra to a wider audience.  


\appendix
\renewcommand{\appendixtocname}{Appendix}
\appendixpage
\section{Coordinate-free description}
\label{sec:cfd}
We provide here a modern, coordinate-free description of the algebra instead of the more traditional coordinate-based approach used above in \Sec{sec:bva} and \Sec{sec:npl}.  

\subsection{Foundations}
Let ${V}$ be a real, 3-dimensional vector space with dual space ${V}^{*}$.  We construct a geometric algebra $\textsf{A}$ based on ${V}$ using the signature $(2,0,1)$. We describe it here algebraically, and postpone until later the geometric interpretation. We begin by recalling some basic facts and definitions regarding the underlying Grassmann algebra \textsf{G} based on $V$:
\begin{compactenum}[$\bullet$]
\item \textsf{G} is a graded algebra consisting of 4 grades:
\begin{compactenum}[$\cdot$]
\item  $\bigwedge^{0}({V})$ is the 1-dimensional subspace of scalars $\mathbb{R}1$.
\item  $\bigwedge^{1}({V})$ can be identified with ${V}$.
\item $\bigwedge^{2}({V})$ can be identified with ${V}^{*}$. 
\item $\bigwedge^{3}({V})$ is a 1-dimensional vector space of pseudoscalars $\mathbb{R}\eye$. $\eye$ is defined more precisely below in \Sec{sec:neb}. 
\end{compactenum}
\item An element of  $\bigwedge^{k}({V})$ is called a $k$-vector.
\item There is an anti-symmetric bilinear product $\wedge$ (called the \emph{wedge} or \emph{Grassmann} product) defined on \textsf{G} that mirrors the subspace structure of weighted subspaces of $V$. For $\vec{a} \in   \bigwedge^{k}({V})$ and $\vec{b} \in  \bigwedge^{m}({V})$, 
\begin{compactenum}[$\cdot$]
\item  $\vec{a}\wedge\vec{b}$ = 0 $\iff$ $\vec{a}$ and $\vec{b}$ are linearly dependent.
\item  Otherwise, $\vec{a}\wedge\vec{b} \in \bigwedge^{k+m}({V})$ represents the weighted subspace spanned by $\vec{a}$ and $\vec{b}$.
\end{compactenum}
\item Let $\vec{a}\in \bigwedge^{1}({V})$ and $\vec{A} \in \bigwedge^{2}({V})$.   We say $\vec{a}$ and $\vec{A}$ are \emph{incident} $\iff \vec{a} \wedge \vec{A}= 0$.
\item For a vector subspace ${T} \subset \bigwedge^{k}({V})$ define the \emph{outer product null space} ${T}_{\wedge}^{\perp} := \{\vec{x} \in \bigwedge^{3-k}({V})\mid \vec{t} \wedge \vec{x} = 0~ \forall~\vec{t} \in {T}\}$.
\item Notation: For a multi-vector $\vec{M} \in \textsf{G}$, $\vec{M} = \sum_{k}\grade{\vec{M}}{k}$ where $\grade{\vec{M}}{k}$ is the grade-$k$ part of $\vec{M}$.
\end{compactenum}

\subsection{Euclidean and ideal elements}  
The inner product of the geometric algebra can be represented by a symmetric bilinear form $B : \vsp \otimes \vsp\rightarrow \mathbb{R}$. 
The \emph{kernel} of $B$ is defined as:  \[N := \{\vec{n} \in \vsp \mid B(\vec{n},\vec{x}) = 0~~\forall ~\vec{x}\}\]   
The signature of the inner product is $(2,0,1)$.  The 1 in the third position gives the dimension of $N$.  So, $N$ is a 1-dimensional vector sub-space of $\vsp$.  As such, it is generated by an element $\vec{\omega}$, which we will specify more precisely below in \Sec{sec:neb}.  Elements of $N$ are called \emph{ideal} vectors. 
Vectors not in $N$ are called \emph{euclidean} (or \emph{proper}).  $N^{\perp}_{\wedge}$ consists of bivectors incident with $\vec{\omega}$, and is a 2-dimensional subspace of $\bigwedge^{2}(\vsp)$.  
An element of $N^{\perp}_{\wedge}$ is said to be an \emph{ideal} bivector; all other bivectors are euclidean (or proper).

\subsubsection{The square of a 1-vector; normalized euclidean vectors}

In a geometric algebra, the geometric product is defined on 1-vectors by \[\vec{a}\vec{b} = \vec{a}\cdot \vec{b} + \vec{a}\wedge \vec{b}\] where $ \vec{a}\cdot \vec{b} = B(\vec{a},\vec{b})$ and $\vec{a}\wedge \vec{b}$ is the the exterior product of the underlying Grassmann algebra. 
The geometric product $\vec{m}^{2}$ for a 1-vector $\vec{m}$ reduces to $\vec{m} \cdot \vec{m}$ since the wedge product is antisymmetric.  For $\vec{m} = \vec{\omega}$, $\vec{\omega}^{2} = \vec{\omega}\cdot \vec{\omega} = 0$ since $\vec{\omega} \in N$.  For any euclidean vector $\vec{m}$, \[ \vec{m}^{2} = \vec{m} \cdot \vec{m} = k \in \mathbb{R}^{+}\]  We define the norm $\| \vec{m} \| := \sqrt{\vec{m}^{2}}$.  Then $\vec{m}_{n} := \sqrt{k}^{-1}\vec{m}$ satisfies $\| \vec{m}_{n} \| = 1$; such a vector is said to be \emph{normalized}. 

\subsubsection{The square of a 2-vector}
\label{sec:neb}
From the above, there are two sorts of  bivectors, ideal and euclidean.  
 For  ideal $\vec{U}$,  $\vec{U} = \vec{\omega}~ \wedge ~\vec{m} $ for some euclidean vector $\vec{m}$. And, since $\vec{\omega} \in N$, $\vec{\omega}~ \wedge ~\vec{m}= \vec{\omega} \vec{m}$.  Then  $\vec{U}^{2} = -\vec{\omega}^{2} \vec{m}^{2} = 0$.   Using the following exercise, it is easy to calculate that $\vec{P}^{2} = -1$. 
 Hence a bivector is ideal $\iff$ its square is zero.  
 
 \myexercise  For normalized euclidean $\vec{P}$, one can find two orthonormal euclidean 1-vectors $\vec{m}$ and $\vec{n}$ such that $\vec{P} = \vec{m}\vec{n}$. 

\subsubsection{Normalized euclidean 2-vectors}
We could define a normalized euclidean bivector to be a bivector satisfying  $\vec{P}^{2}=-1$.  But we can do better, as the following discussion shows.  
Let $\vec{P}$ be any euclidean 2-vector satisfying $\vec{P}^{2}=-1$. Recall the definition of the scaled magnitude function S in \Sec{sec:bva}. We fix $\vec{\omega}$ to be the unique element of $N$ satisfying $S(\vec{\omega} \wedge \vec{P}) = 1$, and define $\eye := \vec{\omega} \wedge \vec{P}$. We show that these definitions don't depend on $\vec{P}$, and that the value of $S(\vec{\omega} \wedge \vec{P})$ can serve as a norm for bivectors.

\begin{mylemma}
For euclidean bivector $\vec{P}$ and ideal bivector $\vec{U}$, $ \grade{\vec{P}\vec{U}}{0} = 0$.
\end{mylemma}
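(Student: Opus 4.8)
The plan is to reduce everything to products of $1$-vectors and then exploit the single structural fact that $\vec{\omega}$ spans the kernel $N$ of the bilinear form $B$, so that $B(\vec{\omega},\vec{x}) = 0$ for every $\vec{x} \in \VS$.

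First I would choose convenient factorizations. Since the grade-$0$ part $\grade{\vec{P}\vec{U}}{0}$ is bilinear in $\vec{P}$ and $\vec{U}$, it suffices to treat normalized representatives; rescaling at the end recovers the general case. By the exercise of \Sec{sec:neb} a normalized euclidean bivector factors as $\vec{P} = \vec{a}\vec{b}$ with $\vec{a}, \vec{b}$ orthonormal euclidean $1$-vectors (so $\vec{a}\cdot\vec{b} = 0$ and $\vec{a}^{2} = \vec{b}^{2} = 1$, whence $\vec{P} = \vec{a}\wedge\vec{b}$). Likewise, every ideal bivector has the form $\vec{U} = \vec{\omega}\wedge\vec{m} = \vec{\omega}\vec{m}$ for some euclidean $\vec{m}$, the last equality holding because $\vec{\omega}\cdot\vec{m} = 0$. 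Thus $\grade{\vec{P}\vec{U}}{0} = \grade{\vec{a}\vec{b}\vec{\omega}\vec{m}}{0}$, the scalar part of a product of four $1$-vectors.

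Next I would invoke the standard identity for the scalar part of a product of four $1$-vectors, $\grade{\vec{a}\vec{b}\vec{c}\vec{d}}{0} = (\vec{a}\cdot\vec{b})(\vec{c}\cdot\vec{d}) - (\vec{a}\cdot\vec{c})(\vec{b}\cdot\vec{d}) + (\vec{a}\cdot\vec{d})(\vec{b}\cdot\vec{c})$, which follows by expanding $\vec{a}\vec{b} = \vec{a}\cdot\vec{b} + \vec{a}\wedge\vec{b}$ and $\vec{c}\vec{d} = \vec{c}\cdot\vec{d} + \vec{c}\wedge\vec{d}$ and using the induced bivector inner product. Setting $\vec{c} = \vec{\omega}$ and $\vec{d} = \vec{m}$, each of the three terms carries a factor in which $\vec{\omega}$ is paired with another vector under $\cdot$: the first term contains $\vec{\omega}\cdot\vec{m}$, the second $\vec{a}\cdot\vec{\omega}$, and the third $\vec{b}\cdot\vec{\omega}$. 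Since $\vec{\omega} \in N$ all three vanish, so $\grade{\vec{P}\vec{U}}{0} = 0$.

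Alternatively, and perhaps more cleanly, I could avoid the four-vector identity by using the cyclic property $\grade{\vec{P}\vec{U}}{0} = \grade{\vec{U}\vec{P}}{0} = \grade{\vec{\omega}(\vec{m}\vec{P})}{0}$. Writing $\vec{m}\vec{P} = \grade{\vec{m}\vec{P}}{1} + \grade{\vec{m}\vec{P}}{3}$, the grade-$3$ piece contributes nothing to the scalar part (since $\vec{\omega}$ is a $1$-vector and grade $4$ is empty in a $3$-dimensional algebra), while the grade-$1$ piece contributes $\vec{\omega}\cdot\grade{\vec{m}\vec{P}}{1} = B(\vec{\omega},\grade{\vec{m}\vec{P}}{1}) = 0$. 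Either route gives the claim. I do not anticipate a genuine obstacle here; the only care needed is the grade bookkeeping in the $3$-dimensional algebra (ruling out any grade-$4$ contribution) together with the two factorization facts from \Sec{sec:neb}, both already available.
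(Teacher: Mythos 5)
Your proof is correct, but it takes a genuinely different route from the paper's. The paper's proof hinges on a special choice of factorization: it picks a normalized 1-vector $\vec{m} \in \vec{U}^{\perp}_{\wedge} \cap \vec{P}^{\perp}_{\wedge}$ --- geometrically, the joining line of the euclidean point $\vec{P}$ and the ideal point $\vec{U}$ --- so that \emph{both} bivectors factor through it, $\vec{P} = \vec{n}\vec{m}$ and $\vec{U} = \lambda\vec{m}\vec{\omega}$; associativity then collapses the entire product, $\vec{P}\vec{U} = \lambda\vec{n}(\vec{m}^{2})\vec{\omega} = \lambda\vec{n}\vec{\omega}$, and the lemma falls out as a corollary of this stronger statement (the product is itself an ideal bivector, not merely scalar-free). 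You instead factor $\vec{P}$ and $\vec{U}$ independently and never need a common factor: your first variant feeds the factorizations into the four-vector scalar identity, where every term acquires a factor $B(\vec{\omega},\cdot) = 0$, and your second variant uses the symmetry $\grade{\vec{P}\vec{U}}{0} = \grade{\vec{U}\vec{P}}{0}$ plus grade arithmetic to reduce everything to $B(\vec{\omega}, \grade{\vec{m}\vec{P}}{1}) = 0$. What your route buys is robustness and generality --- the second variant really proves that $\grade{\vec{\omega}\vec{X}}{0} = 0$ for \emph{any} multivector $\vec{X}$, since a 1-vector can only reach grade 0 by pairing under $B$ with the grade-1 part --- and it avoids having to justify that the two outer product null spaces share a unit vector. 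What the paper's route buys is a shorter computation and a sharper geometric conclusion, in keeping with the article's habit of reading geometry off the full product rather than a single grade. One point worth making explicit in your write-up: both the four-vector identity and the cyclic symmetry of $\grade{\cdot}{0}$ must be checked to survive a degenerate metric; they do, because each follows from the Clifford relation $\vec{u}\vec{v}+\vec{v}\vec{u} = 2B(\vec{u},\vec{v})$ (equivalently, an orthogonal basis-blade computation) without ever inverting the metric --- but since the paper itself warns that the standard references assume non-degeneracy, a sentence saying so would make your argument airtight.
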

\begin{proof}
Choose $\vec{m} \in \vec{U}^{\perp}_{\wedge} \cap \vec{P}^{\perp}_{\wedge}$ with $\| \vec{m} \| = 1$.\footnote{Or define $\vec{m} = \vec{U} \vee \vec{P}$ and normalize $\vec{m}$.}  Then $\vec{U} =  \lambda \vec{m} \vec{\omega} $ for $\lambda \in \mathbb{R}^{*}$. Write $\vec{P} =  \vec{n}\vec{m}$ where $\vec{n}$ is normalized and orthogonal to $\vec{m}$.  Then
\begin{align*}
{\vec{P}\vec{U}}&={(\vec{n}\vec{m})(\lambda\vec{m}\vec{\omega})} \\
&=  \lambda{\vec{n}(\vec{m}^{2})\vec{\omega}} \\
&=  \lambda{\vec{n}\vec{\omega}} 
\end{align*}
Here we have used associativity of the geometric product, and the fact that $\vec{m}$ is normalized.  Finally, since $\vec{\omega} \in N$,   $\grade{\vec{n}\vec{\omega}}{0}  = \vec{n} \cdot \vec{\omega} = 0$.
\end{proof}
\begin{mylemma}
Given  euclidean bivectors $\vec{P}$ and $\vec{Q}$, $\vec{Q}  = \lambda \vec{P} + \vec{U}$ for some $ \lambda \in \mathbb{R}, \lambda \neq 0$ and $\vec{U} \in N_{\wedge}^{\perp}$. Furthermore, $\vec{Q}^{2} = \lambda^{2}\vec{P}^{2}$.
\end{mylemma}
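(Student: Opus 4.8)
The statement splits into a linear-algebra claim (the decomposition) and an algebraic claim (the square), and I would establish them in that order. For the decomposition, the plan is a dimension count in $\bigwedge^{2}(\vsp)$. Since $\vsp$ is $3$-dimensional, $\bigwedge^{2}(\vsp)$ has dimension $\binom{3}{2}=3$, while the space $N^{\perp}_{\wedge}$ of ideal bivectors was shown above to be $2$-dimensional. Because $\vec{P}$ is euclidean it is not ideal, so $\vec{P}\notin N^{\perp}_{\wedge}$; hence $\mathbb{R}\vec{P}$ and $N^{\perp}_{\wedge}$ together span a $3$-dimensional subspace, giving the direct sum $\bigwedge^{2}(\vsp)=\mathbb{R}\vec{P}\oplus N^{\perp}_{\wedge}$. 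Every bivector, in particular $\vec{Q}$, then has a unique expression $\vec{Q}=\lambda\vec{P}+\vec{U}$ with $\lambda\in\mathbb{R}$ and $\vec{U}\in N^{\perp}_{\wedge}$, and I would argue $\lambda\neq0$ by contradiction: were $\lambda=0$, then $\vec{Q}=\vec{U}\in N^{\perp}_{\wedge}$ would be ideal, against the hypothesis that $\vec{Q}$ is euclidean.

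For the square, I would first reduce to the inner product. The self-product of any bivector has vanishing commutator part, $\vec{Q}\times\vec{Q}=0$, so $\vec{Q}^{2}=\grade{\vec{Q}\vec{Q}}{0}=\vec{Q}\cdot\vec{Q}$; that is, squaring a bivector amounts to evaluating the symmetric bilinear form $\vec{A}\cdot\vec{B}=\grade{\vec{A}\vec{B}}{0}$. Expanding by bilinearity would then give
\[\vec{Q}^{2}=(\lambda\vec{P}+\vec{U})\cdot(\lambda\vec{P}+\vec{U})=\lambda^{2}\,\vec{P}\cdot\vec{P}+2\lambda\,\vec{P}\cdot\vec{U}+\vec{U}\cdot\vec{U}.\]
Here the middle term vanishes since $\vec{P}\cdot\vec{U}=\grade{\vec{P}\vec{U}}{0}=0$ by the preceding lemma, and the last term vanishes since $\vec{U}\cdot\vec{U}=\vec{U}^{2}=0$ because $\vec{U}$ is ideal, leaving $\vec{Q}^{2}=\lambda^{2}\vec{P}^{2}$.

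I expect no serious obstacle; the calculations are short, and the two facts doing the real work are both imported from earlier in the appendix. The decomposition rests entirely on $N^{\perp}_{\wedge}$ having dimension exactly $2$ and euclidean bivectors being precisely those outside it. The square relies on the preceding lemma to kill the cross term; equivalently one could observe that the grade-$2$ parts of $\vec{P}\vec{U}$ and $\vec{U}\vec{P}$ are negatives of one another, so that the anticommutator $\vec{P}\vec{U}+\vec{U}\vec{P}$ collapses to $2\grade{\vec{P}\vec{U}}{0}$, which again vanishes. The one point I would be careful about is the reduction $\vec{Q}^{2}=\vec{Q}\cdot\vec{Q}$, which is what lets me ignore all but the scalar part of the mixed products.
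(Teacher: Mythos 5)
Your proposal is correct and follows essentially the same route as the paper: the decomposition via the codimension-one (equivalently, dimension) argument that $\vec{P},\vec{Q}\notin N^{\perp}_{\wedge}$, and the square via bilinear expansion, killing the cross term with the preceding lemma and the last term with $\vec{U}^{2}=0$. Your reduction $\vec{Q}^{2}=\grade{\vec{Q}\vec{Q}}{0}$ and the anticommutator remark are just a restatement of the paper's observation that the grade-0 part of $\vec{P}\vec{U}$ is the symmetric part of the product, so the two proofs coincide in substance.
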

\begin{proof}
The first part follows by observing that $N^{\perp}_{\wedge} \subset \bigwedge^{2}(V)$ is a subspace of co-dimension 1 in $\bigwedge^{2}(V)$, and $\vec{Q}, \vec{P} \notin N^{\perp}_{\wedge} $. The second assertion follows by observing:
\begin{align*}
\vec{Q}^{2} &= (\lambda \vec{P} + \vec{U})^{2}\\
&=\lambda^{2} \vec{P}^{2} + \lambda(\vec{P}\vec{U} + \vec{U}\vec{P}) + \vec{U}^{2}\\
&=\lambda^{2}  \vec{P}^{2} + 2 \lambda \grade{\vec{P}\vec{U}}{0} \\
&= \lambda^{2} \vec{P}^{2}
\end{align*} 
Here we have used the fact that the grade-0 part of the geometric product $ \vec{P}\vec{U}$ is the symmetric part of the product, that $\vec{U}^{2}=0$ for ideal $\vec{U}$, and the previous lemma.  
\end{proof}
\begin{mythm}
Given euclidean bivectors $\vec{P}$ and $\vec{Q}$ such that $\vec{P}^{2} = \vec{Q}^{2}$, and $\vec{\omega} \in N$. Then $\vec{\omega} \wedge \vec{P}=\pm \vec{\omega} \wedge \vec{Q}$.  
\end{mythm}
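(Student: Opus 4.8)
The plan is to lean directly on the two preceding lemmas, since Lemma 2 has already been arranged to do essentially all the work. First I would invoke Lemma 2 to write $\vec{Q} = \lambda \vec{P} + \vec{U}$ for some scalar $\lambda \neq 0$ and some $\vec{U} \in N^{\perp}_{\wedge}$, and to record the accompanying identity $\vec{Q}^{2} = \lambda^{2}\vec{P}^{2}$ that the lemma supplies alongside the decomposition.

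Next I would bring in the hypothesis. Since $\vec{P}$ is a euclidean bivector, its square is nonzero (recall that a bivector is ideal if and only if its square vanishes). Combining the hypothesis $\vec{P}^{2} = \vec{Q}^{2}$ with the lemma's identity $\vec{Q}^{2} = \lambda^{2}\vec{P}^{2}$ and cancelling the nonzero scalar $\vec{P}^{2}$ yields $\lambda^{2} = 1$, hence $\lambda = \pm 1$. This is the single place where the metric hypothesis is actually used, and it is what promotes the bare statement $\lambda \neq 0$ to the sharp conclusion $\lambda = \pm 1$.

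Finally I would wedge the decomposition with $\vec{\omega}$ on the left. By definition, membership $\vec{U} \in N^{\perp}_{\wedge}$ means $\vec{U}$ is a bivector incident with $\vec{\omega}$, i.e.\ $\vec{\omega} \wedge \vec{U} = 0$, so the ideal component drops out:
\[ \vec{\omega} \wedge \vec{Q} = \lambda(\vec{\omega} \wedge \vec{P}) + \vec{\omega} \wedge \vec{U} = \lambda(\vec{\omega} \wedge \vec{P}) = \pm\,\vec{\omega} \wedge \vec{P}, \]
which is exactly the asserted identity.

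I do not expect a genuine obstacle: the substantive content has been front-loaded into Lemma 2 (the decomposition and the scaling of the square), so the theorem is really a two-line corollary. The only items that need care are the two facts imported from earlier in the appendix -- that a euclidean bivector has nonzero square, so that cancelling $\vec{P}^{2}$ is legitimate, and that $N^{\perp}_{\wedge}$ is precisely the incidence locus $\vec{\omega} \wedge \vec{U} = 0$ that annihilates the ideal part under the wedge.
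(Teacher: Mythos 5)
Your proposal is correct and follows the paper's proof exactly: invoke Lemma 2 for the decomposition $\vec{Q} = \lambda \vec{P} + \vec{U}$ with $\vec{Q}^{2} = \lambda^{2}\vec{P}^{2}$, deduce $\lambda = \pm 1$ from the hypothesis, and wedge with $\vec{\omega}$ so the ideal term vanishes. You merely make explicit two details the paper leaves implicit (that $\vec{P}^{2} \neq 0$ justifies the cancellation, and that $\vec{U} \in N^{\perp}_{\wedge}$ means $\vec{\omega} \wedge \vec{U} = 0$), which is fine.
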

\begin{proof}
By Lemma 2, $\vec{Q} = \lambda \vec{P} + \vec{U}$ for $\vec{U} \in N_{\wedge}^{\perp}$.  Since $\vec{Q}^{2} = \vec{P}^{2}$, $\lambda = \pm 1$.  Wedging with $\vec{\omega}$ yields $\vec{\omega}\wedge \vec{Q} = \pm \vec{\omega} \wedge \vec{P} + \vec{\omega} \wedge \vec{U} = \pm \vec{\omega} \wedge \vec{P}$.
\end{proof}

The preceding theorem allows us to obtain a stronger normalization than the condition $\vec{Q}^{2}=-1$.  Define the norm of a bivector to be $\| \vec{Q} \| := S(\vec{\omega} \wedge \vec{Q})$.  We say that a euclidean bivector $\vec{Q}$ is \emph{normalized} when $\| \vec{Q} \| = 1$.    In every one-dimensional vector subspace of $\bigwedge^{2}(V)$, there are two solutions $\{ \vec{Q}, -\vec{Q}\}$ to $\vec{Q}^{2}=-1$. $\| \vec{Q} \| = 1$ picks out exactly one of these solutions. The uniqueness of this result simplifies many calculations.

\subsubsection{Multiplication by the pseudoscalar}
 Multiplication by the basis pseudoscalar $\eye$ is an important operation, sometimes called the \emph{polarity on the metric quadric}. It maps an element to its orthogonal complement with respect to the inner product.  This multiplication is important enough to merit its own notation \[\Pi(\vec{X}) := \vec{X}^{\perp} := \eye \vec{X}\] The result is  called the  \emph{polar} of $\vec{X}$.  By the previous section,  a euclidean bivector $\vec{Q}$ is normalized $\iff  \eye := \vec{\omega} \vec{Q}$.  Then the polar of a normalized euclidean point $\vec{P}$ is given by $ \vec{P}^{\perp} =  \eye \vec{P} =  -\vec{\omega}$ since $\vec{P}^{2} = -1$.  
 
 $\vec{X}^\perp$ is sometimes called the \emph{inner product null space} of $\vec{X}$.  Note in contrast that $\vec{X}^\perp_{\wedge}$ is the \emph{outer product null space}.
 
 The situation is a little more complicated for 1-vectors. Let $\vec{m}$ be a normalized euclidean 1-vector.  Let $\vec{n}$ be a 1-vector orthogonal to $\vec{m}$. Then the product $\vec{n}\vec{m}$ is a normalized euclidean 2-vector, hence $\eye = \vec{\omega}\vec{n}\vec{m}$ and $\vec{m}^{\perp} := \eye \vec{m} = \vec{\omega} \vec{n} = \vec{U}$, where $\vec{U}$ is an ideal bivector.   
 
 \myexercise  The kernel of $\Pi$, restricted to 1-vectors, is $N$, while the kernel of $\Pi$, restricted to 2-vectors, is $N_{\wedge}^{\perp}$. 

\subsection{Ideal inner product on ideal bivectors}

We saw above that euclidean bivectors can be normalized, but an ideal bivector $\vec{U}$ satisfies $\vec{U}^{2}=0$ hence cannot be normalized in the same way.  However, there is a way to define an alternative norm -- along with an associated inner product -- on the ideal bivectors.  We define this \emph{ideal} inner product and then  show how to derive the  complete inner product structure on the euclidean elements (of all grades) from this ideal inner product.  

\subsubsection{The quotient space $\vsp/\vec{\omega}$}
Define an equivalence relation on the set of  euclidean vectors: \[\vec{m} \equiv \vec{n} \iff~\exists~ c \in \mathbb{R}~\text{such that}~\vec{m} - \vec{n} = c\,\vec{\omega}.\]  Let the equivalence class of $\vec{m}$ be denoted by $[\vec{m}]$.   Define a symmetric bilinear form $\widetilde{B}$ on the resulting quotient space $\vsp/\vec{\omega}$ by $\widetilde{B}( [\vec{m}], [\vec{n}]) := B( \vec{m}, \vec{n})$.  This is well-defined.  For if $\widetilde{\vec{m}}$ and $\widetilde{\vec{n}}$ are two other representatives, then $\widetilde{\vec{m}} = \vec{m} + c\vec{\omega}$ and  $\widetilde{\vec{n}} = \vec{n} + d\vec{\omega}$.  $B(\widetilde{ \vec{m}}, \widetilde{\vec{n}}) = B(\vec{m} + c\vec{\omega}, \vec{n} + d\vec{\omega}) = B(\vec{m}, \vec{n})$ since $\vec{\omega} \in N$.  

\subsubsection{An \quot{ideal} inner product on $N^{\perp}_{\wedge}$}

Furthermore,  \[\vec{m} \equiv \vec{n} \iff \Pi(\vec{m}) = \Pi(\vec{n})\] $(\Rightarrow)$: If $\vec{m} \equiv \vec{n}$,  then $\vec{m} = \vec{n} + c\vec{\omega}$ and $\Pi(\vec{m}) = \Pi(\vec{n}) + c\Pi(\vec{\omega}) = \Pi(\vec{n})$ since $\vec{\omega} \in \ker(\Pi)$.  $(\Leftarrow)$: If $\Pi(\vec{m}) = \Pi(\vec{n})$, then by linearity $\Pi(\vec{m}) -\Pi(\vec{n}) = \eye(\vec{m} - \vec{n}) =  0$.  This means $\vec{m} - \vec{n} \in ker(\Pi)$.  Hence, by the  previous paragraph, $\vec{m} - \vec{n} = c\vec{\omega}$ for $c \in \mathbb{R}$. Thus $\widetilde{\Pi} : \vsp/\vec{\omega} \rightarrow N_{\wedge}^{\perp}$ defined by $\widetilde{\Pi}([\vec{m}]) := \Pi(\vec{m})$ is well-defined.  In fact we have shown that it is a bijection and hence has a well-defined inverse.

Use this inverse to transfer the inner product $\widetilde{B}( [\vec{m}], [\vec{n}])$ onto the ideal bivectors via \[ \langle \vec{U},\vec{W} \rangle_{\infty}  := \widetilde{B}(\widetilde{ \Pi}^{-1}(\vec{U}), \widetilde{\Pi}^{-1}(\vec{W}) )\]   It's not hard to show that $\langle , \rangle_{\infty}$ is the standard positive definite inner product  on $N^{\perp}_{\wedge}$ (since we began with the signature $(2,0,1)$) .  This induces a norm on ideal bivectors by $\| \vec{U} \|_{\infty} := \sqrt{\langle \vec{U}, \vec{U} \rangle_{\infty}}$.  It is always possible to choose a representative for $\vec{U}$ so that $\| \vec{U} \|_{\infty} = 1$. 

\subsection{Recreating the $(2,0,1)$ inner product from the ideal inner product}
\label{sec:eipfromiip}
 It's tempting to view the ideal norm $\langle, \rangle_{\infty}$ as something \emph{ad hoc} added on to the algebra $\pdclal{2}{0}{1}$. However,  the above discussion supports the contrary interpretation that the ideal inner product $\langle, \rangle_{\infty}$ on ideal bivectors is the \emph{primary} structure from which the inner product $(2,0,1)$ on vectors is derived, rather than vice-versa.  For, let $\vec{a}$ and $\vec{b}$ be two euclidean vectors, and $\vec{A} := \vec{a} \wedge \vec{\omega}$ and $\vec{B} := \vec{b} \wedge \vec{\omega}$ be their wedge product with the ideal vector.  Then define a symmetric bilinear form $\widehat{B} (\vec{a}, \vec{b}) := \langle \vec{A}, \vec{B} \rangle_{\infty}$.  From the above discussion it is clear that $\widehat{B}$ is  well-defined, and in fact, $\widehat{B} = B$.  So one can begin with the ideal bivector subspace $N^{\perp}_{\wedge}$ equipped with the signature $(2,0,0)$ and \quot{push} it in this straightforward way onto the euclidean 1-vectors to obtain the euclidean plane.  Similar constructions work for any dimension.

\subsection{Interpretation with respect to $\pdclal{2}{0}{1}$}

The above treatment has been carried out for an abstract real vector space $\vsp$ of dimension 3.  To arrive at the algebra $\pdclal{2}{0}{1}$ one must specify $\vsp$, as outlined in \Sec{sec:algintro} above, which leads to the choice  $\vsp := \RD{3}$, the dual space of $\R{3}$. In the resulting  vector space geometric algebra $\dclal{2}{0}{1}$, 1-vectors represents oriented planes through the origin and 2-vectors represent standard vectors.  In the second step, the algebra has to be projectivized to form $\pdclal{2}{0}{1}$.   Hence, 1-vectors transform to lines and 2-vectors become points.  In particular, $\vec{\omega}$ represents a plane in $\RD{3}$, and when projectivized represents a line, the \emph{ideal line} of the euclidean plane.   The ideal bivectors are ideal points, incident with $\vec{\omega}$.  Interpreting the contents of \Sec{sec:neb} in this light: the difference $\vec{P}-\vec{Q}$, for normalized euclidean points $\vec{P}$ and $\vec{Q}$, is an ideal point.  This is reminiscent of how free vectors are defined to be the difference of two euclidean points. In fact,  ideal points are equivalent to free vectors, an insight already made by Clifford in \cite{clifford73}, so that the vector algebra $\clal{2}{0}{0}$ is contained here as the ideal line with its ideal inner product.  

The equivalence classes of $\vsp/\vec{\omega}$, in the context of $\pdclal{2}{0}{1}$, are families of parallel lines, which share a common ideal point. Such a set of lines is known as a \emph{line pencil} in classical projective geometry; in this case the pencil is centered on (or \emph{carried by}) an ideal point.   To see this:  $\vec{m} \equiv \vec{n} \iff \vec{m} - \vec{n} = c\vec{\omega}$.  The  point $\vec{U} := \vec{m} \wedge \vec{n}$ satisfies $\vec{U} \wedge \vec{m} = \vec{U} \wedge  \vec{n} = 0$.  Hence $\vec{U} \wedge \vec{\omega} = 0$, which shows that $\vec{U}$ is ideal, as claimed.
The metric polarity $\widetilde{\Pi}$,  in this context, maps an equivalence class $[\vec{m}]$ to an ideal point perpendicular to the ideal point $\vec{U}$.  It maps all euclidean points (2-vectors) to the ideal line.

Equipped with this coordinate-free foundation of the algebra, the reader can now rejoin the article at \Sec{sec:gpdetail}.


\vspace{-.15in}

\begin{thebibliography}{Gun16b}

\bibitem[Art57]{artin57}
Emil Artin.
\newblock {\em Geometric Algebra}, volume~3 of {\em Interscience Tracts in Pure
  and Applied Mathematics}.
\newblock Interscience Publishers, 1957.

\bibitem[Bla38]{blaschke38}
Wilhelm Blaschke.
\newblock {\em Ebene {K}inematik}.
\newblock Tuebner, Leibzig, 1938.

\bibitem[Cal07]{calvet2007}
Ramon~Gonz\'{a}lez Calvet.
\newblock {\em Treatise of Plane Geometry Through Geometric Algebra}.
\newblock TIMSAC, 2007.

\bibitem[Cli73]{clifford73}
William Clifford.
\newblock A preliminary sketch of biquaternions.
\newblock {\em Proc. London Math. Soc.}, 4:381--395, 1873.

\bibitem[Cli78]{clifford78}
Professor Clifford.
\newblock Applications of grassmann's extensive algebra.
\newblock {\em American Journal of Mathematics}, 1(4):pp. 350--358, 1878.

\bibitem[DFM07]{dfm07}
Leo Dorst, Daniel Fontijne, and Stephen Mann.
\newblock {\em Geometric Algebra for Computer Science}.
\newblock Morgan Kaufmann, San Francisco, 2007.

\bibitem[Gun11a]{gunnThesis}
Charles Gunn.
\newblock {\em Geometry, Kinematics, and Rigid Body Mechanics in Cayley-Klein
  Geometries}.
\newblock PhD thesis, Technical University Berlin, 2011.
\newblock \url{http://opus.kobv.de/tuberlin/volltexte/2011/3322}.

\bibitem[Gun11b]{gunn2011}
Charles Gunn.
\newblock On the homogeneous model of euclidean geometry.
\newblock In Leo Dorst and Joan Lasenby, editors, {\em A Guide to Geometric
  Algebra in Practice}, chapter~15, pages 297--327. Springer, 2011.

\bibitem[Gun11c]{gunnFull2010}
Charles Gunn.
\newblock On the homogeneous model of euclidean geometry: Extended version.
\newblock {\em \url{http://arxiv.org/abs/1101.4542}}, 2011.

\bibitem[Gun16a]{gunn2016b}
Charles Gunn.
\newblock Doing euclidean plane geometry using projective geometric algebra.
\newblock {\em Advances in Applied Clifford Algebras}, 2016.

\bibitem[Gun16b]{gunn2016}
Charles Gunn.
\newblock Geometric algebras for euclidean geometry.
\newblock {\em Advances in Applied Clifford Algebras}, pages 1--24, 2016.

\bibitem[Li08]{li08}
Hongbo Li.
\newblock {\em Invariant Algebras and Geometric Reasoning}.
\newblock World Scientific, Singapore, 2008.

\bibitem[McC90]{mcc}
J.~Michael McCarthy.
\newblock {\em An Introduction to Theoretical Kinematics}.
\newblock MIT Press, Cambridge, MA, 1990.

\bibitem[Sel00]{selig00}
Jon Selig.
\newblock Clifford algebra of points, lines, and planes.
\newblock {\em Robotica}, 18:545--556, 2000.

\bibitem[Sel05]{selig05}
Jon Selig.
\newblock {\em Geometric Fundamentals of Robotics}.
\newblock Springer, 2005.

\end{thebibliography}

\end{document}